\definecolor{webgreen}{rgb}{0,.5,0}
\definecolor{webbrown}{rgb}{.6,0,0}
\definecolor{RoyalBlue}{cmyk}{1, 0.50, 0, 0}
\newcommand{\R}     {\mathbb{R}}
\newcommand{\C}     {\mathbb{C}}
\newcommand{\N}     {\mathbb{N}}
\newcommand{\supp}{\mathrm{supp}}
\newcommand{\df}{\stackrel{\rm def}{=}}
\def\cal{\mathcal}
\let\Re=\undefined
\DeclareMathOperator{\Re}{Re}
\let\Im=\undefined
\DeclareMathOperator{\Im}{Im}
\def\ge{\geqslant}
\def\le{\leqslant}
\newtheorem{theorem}{Theorem}[section]
\newtheorem{lemma}[theorem]{Lemma}
\theoremstyle{remark}
\numberwithin{equation}{section}
\begin{document}

\title[Generalizations of Menchov-Rademacher theorem  \ldots]{
Generalizations of Menchov-Rademacher theorem and existence of wave operators in Schr\"odinger evolution     }

\begin{abstract}
We obtain generalizations of the classical Menchov-Rademacher theorem to the case of continuous orthogonal systems. These results are applied to show the existence of Moller wave operators in Schr\"odinger evolution.
\end{abstract} \vspace{1cm}

\thanks{
The work of SD done in the first two sections
  was supported by the grant NSF-DMS-1764245 and his research on the rest of the paper was supported by the Russian Science Foundation (project RScF-19-71-30004). The work of LM was supported by the grant RTG NSF-DMS-1147523.}

\author[S. Denisov, L. Mohamed]{Sergey  Denisov, Liban Mohamed}
\address{Department of Mathematics, University of Wisconsin-Madison, 480 Lincoln Dr., Madison, WI 53706, USA}
\email{\href{mailto:denissov@wisc.edu}{denissov@wisc.edu}}

\address{Department of Mathematics, University of Wisconsin-Madison, 480 Lincoln Dr., Madison, WI 53706, USA}
\email{\href{mailto:lmohamed@wisc.edu}{lmohamed@wisc.edu}}

\subjclass{}

\keywords{}

\maketitle

\setcounter{tocdepth}{3}

\section{Introduction}

The celebrated Menchov-Rademacher Theorem (see, e.g., \cite{ks}) gives a general condition for a.e. convergence of the orthogonal series:
\begin{theorem}[{\bf Menchov-Rademacher}]
Suppose $\{\phi_n(x)\},n\in \mathbb{N}$ is orthonormal system in $L^2(0,1)$ and the sequence $\{a_n\}$ satisfies
\[
l\df \sum_{n=1}^\infty a_n^2\log^2(n+1)<\infty\,.
\]
Then, the series $\sum_{n=1}^\infty a_n\phi_n(x)$ converges for a.e. $x\in (0,1)$. Moreover, if \[m(x)\df \sup_{n\in \N}\left|\sum_{j=1}^n a_j\phi_j(x)\right|\] defines a maximal function, then
\[
\|m\|_{L^2(0,1)}\le C l^{1/2}
\]
with some absolute constant $C$.
\end{theorem}
This result can be easily modified to cover  orthonormal systems in $L^2_\mu(0,1)$ where $\mu$ is a measure on $(0,1)$. In this paper, we prove an analog of this result for the orthogonal system with ``continuous'' parameter of orthogonality and apply it to show existence of wave operators for Schr\"odinger evolution.\smallskip

We start with the following definitions.

{\bf Definition.} We say that $f\in L^2_{\rm loc}(\mathbb{R}^+)$ if
\begin{equation}\label{c1}
\int_0^a |f(r)|^2dr<\infty
\end{equation}
for all $a>0$. 

{\bf Definition.} Let  a pair $(P,\sigma)$ consist of a function $P(r,k):\R^+\times\R\to\C$ and a measure $\sigma$ on $\R$. We say that $(P,\sigma)$ is a continuous orthonormal system if
\begin{enumerate}
\item[(a)] for $\sigma$-a.e. $k\in \R$, $P(r,k)\in L^2_{\rm loc}(\R^+)$,
\item[(b)]
for every $f\in L^2(\R^+)$ and every $a>0$, we have \[
\int_\R\left|\int_0^a f(r)P(r,k)dr\right|^2d\sigma(k)=\int_{0}^a|f(r)|^2dr\,.
\]
\end{enumerate}

Our first result is the following theorem.
\begin{theorem}\label{main0}
Suppose $(P,\sigma)$ is continuous orthonormal system and \[L\df \int_{\R^+}|f(r)|^2\log^2(2+r)dr.\] Then, the sequence $\left\{\displaystyle \int_{0}^nf(r)P(r,k)dr\right\}$ converges for $\sigma$-a.e. $k\in\R$. Moreover, if
$$M(k)\df \sup_{n\in\N}\left|\int_0^nf(r)P(r,k)dr\right|,$$
then $\|M\|_{L^2_\sigma(\R)}\leq CL^{1/2}$ with some absolute constant C.
\end{theorem}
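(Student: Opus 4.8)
The plan is to adapt the classical dyadic-block argument behind the Menchov-Rademacher theorem to the continuous setting, using orthonormality property (b) as the substitute for the discrete orthogonality of $\{\phi_n\}$. The key structural observation is that property (b) gives, for any fixed $a>0$ and any $f\in L^2(\R^+)$, an isometry from $L^2(0,a)$ into $L^2_\sigma(\R)$: if I write $F_a(k)\df\int_0^a f(r)P(r,k)\,dd r$, then $\|F_a\|_{L^2_\sigma}^2=\int_0^a|f|^2\,dd r$. More generally, the same identity applied to differences shows $\int_\R|F_b(k)-F_a(k)|^2\,dd\sigma(k)=\int_a^b|f(r)|^2\,dd r$ for $0<a<b$, so the partial-integral family inherits an $L^2_\sigma$-geometry that mirrors exactly the Hilbert-space geometry of the truncations of $f$.

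First I would set up dyadic blocks on the integration variable $r$. Let $I_{j,m}=((m-1)2^{-j}\cdot,\,m2^{-j}\cdot)$ — more precisely, for the convergence of the integer-indexed sequence I only need to control $\int_0^n$ for $n\in\N$, so I would decompose the interval $(0,n)$ into dyadic pieces in the standard Menchov-Rademacher manner: write each integer endpoint in binary and express $F_n-F_{2^p}$ (for $2^p\le n<2^{p+1}$) as a telescoping sum of at most $p+1\sim\log n$ blocks, where each block is an integral $\int_a^b f(r)P(r,k)\,dd r$ over a dyadic interval. The second step is to estimate the maximal function over each dyadic generation. For the generation-$j$ blocks I would introduce the ``block maxima'' and use the elementary bound that a maximum of finitely many quantities is dominated by the $\ell^2$ sum, then apply property (b) block by block. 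Because the dyadic blocks at a fixed generation have disjoint $r$-supports, summing their $L^2_\sigma$ norms squared reproduces $\int|f|^2$ restricted to that generation's region, so the generation-$j$ contribution to $\|M\|_{L^2_\sigma}^2$ is controlled by the energy of $f$ on the corresponding scales.

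The crucial quantitative step is the accounting of the logarithmic weight. Summing over generations $j$, each block at scale $j$ that appears in the decomposition of some $F_n$ with $n\sim 2^j$ carries a factor coming from the number of generations, which is of order $\log n\sim j$; squaring and summing via Cauchy-Schwarz over the dyadic decomposition produces the weight $\log^2(2+r)$ against $|f(r)|^2$, exactly matching the definition of $L$. Concretely, I would bound
\begin{equation*}
\|M\|_{L^2_\sigma}^2 \le C\sum_{j\ge 0}(j+1)\sum_{\text{blocks at scale }j}\int_\R\left|\int_{\text{block}}f(r)P(r,k)\,dd r\right|^2 dd\sigma(k),
\end{equation*}
convert the inner integral by property (b) into $\int_{\text{block}}|f(r)|^2\,dd r$, and then reorganize the double sum so that a point $r\sim 2^{j_0}$ is charged a total weight $\sum_{j\le j_0}(j+1)\sim j_0^2\sim\log^2(2+r)$, giving $\|M\|_{L^2_\sigma}^2\le C\,L$. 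Once the maximal bound is in hand, a.e. convergence follows from a standard density argument: for $f$ with compactly supported smooth approximants the integer-indexed sequence is eventually constant (or trivially convergent), and the maximal inequality controls the tail, so convergence $\sigma$-a.e. extends to all $f$ with $L<\infty$.

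The main obstacle I anticipate is organizing the dyadic decomposition so that the disjointness of blocks in the variable $r$ is used correctly when invoking property (b); unlike the classical theorem, orthonormality here holds only in integrated form over truncations, so I must be careful to express every block as a genuine difference $F_b-F_a$ with $a<b$ rather than as an isolated ``coefficient,'' and verify that the isometry applies to each such difference. A secondary subtlety is the passage from the finite maximal sums (over $n\le N$) to the supremum over all $n\in\N$, which requires a monotone-convergence or Fatou argument in $L^2_\sigma$ to ensure the bound survives the limit.
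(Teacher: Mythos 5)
Your proposal is correct and follows essentially the same route as the paper: dyadic blocks in the $r$ variable, binary expansion of $n$ combined with the bound $\bigl|\sum_{j=1}^N x_j\bigr|^2\le N\sum_{j=1}^N|x_j|^2$, property (b) applied block by block (to $f\chi_{(a,b)}$, which legitimizes each block as a difference of truncations), and Cauchy--Schwarz to collect the $\log^2(2+r)$ weight. The only minor deviation is the final step: you obtain $\sigma$-a.e.\ convergence by a density argument from the maximal inequality, whereas the paper derives it directly from the $\sigma$-a.e.\ absolute convergence of $\sum_j|P_j(k)|$ (handling the dyadic subsequence) together with the $\sigma$-a.e.\ finiteness of $\sum_j|S_j''(k)|^2$ (killing the oscillation inside each dyadic block) --- both routes are standard and valid.
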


{\bf Definition. } We will call continuous orthonormal system $(P,\sigma)$ normalized if there is a continuous positive function $\kappa$ defined on $\R$ such that
\begin{equation}\label{sd_d1}
\kappa^{-1}\in L^\infty(\R), \quad K\df \sup_{r\ge 0}\int_\R \frac{|P(r,k)|^2}{\kappa(k)}d\sigma<\infty\,.
\end{equation}

For the normalized systems, the previous theorem can be improved in the following way.

\begin{theorem}\label{main1}
Consider the normalized continuous orthonormal system $(P,\sigma,\kappa)$ and 
suppose that $f\log(2+r)\in L^2(\mathbb{R}^+)$, then  
\begin{equation}\label{sd_l2}
\int_{\mathbb{R}}\sup_{t>0}\left|  \int_{0}^{t}f(r)P(r,k)dr\right|^2\frac{d\sigma}{\kappa(k)} \lesssim (\|\kappa^{-1}\|_{L^\infty(\mathbb{R})}+K)\int_0^\infty |f(r)|^2\log^2(2+r)dr\,.
\end{equation}
Moreover, as $R\to\infty$,
\begin{equation}\label{sd_l3}
\int_0^R f(r)P(r,k)dr\to \int_0^\infty f(r)P(r,k)dr
\end{equation}
for a.e. $k$ with respect to measure $\sigma$.
\end{theorem}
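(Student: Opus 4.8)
The plan is to deduce \eqref{sd_l2} from Theorem \ref{main0} by splitting the continuous supremum over $t>0$ into a supremum over integers—controlled by Theorem \ref{main0}—plus a within-interval oscillation term controlled by the normalization. Write $S(t,k)\df\int_0^t f(r)P(r,k)\,dr$. For $t\in[n,n+1]$ we have $S(t,k)=S(n,k)+\int_n^t f(r)P(r,k)\,dr$, whence
\[
\sup_{t>0}|S(t,k)|\le \sup_{n\ge 0}|S(n,k)|+\sup_{n\ge 0} g_n(k),\qquad g_n(k)\df\sup_{t\in[n,n+1]}\Bigl|\int_n^t f(r)P(r,k)\,dr\Bigr|.
\]
Squaring and using $(a+b)^2\le 2a^2+2b^2$, it then suffices to bound the two resulting terms in $L^2_{\sigma/\kappa}(\R)$ separately (note $S(0,k)=0$, so the first supremum is essentially the Menchov--Rademacher maximal function of Theorem \ref{main0}).

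For the discrete term I would simply use $\kappa^{-1}\in L^\infty$ and Theorem \ref{main0}:
\[
\int_\R\bigl(\sup_{n\ge 0}|S(n,k)|\bigr)^2\frac{d\sigma}{\kappa(k)}\le \|\kappa^{-1}\|_{L^\infty}\int_\R\bigl(\sup_{n}|S(n,k)|\bigr)^2 d\sigma\lesssim \|\kappa^{-1}\|_{L^\infty}\int_0^\infty|f|^2\log^2(2+r)\,dr.
\]
For the oscillation term I would use $\sup_n g_n\le(\sum_{n\ge 0} g_n^2)^{1/2}$ and, for each $n$, Cauchy--Schwarz on $[n,n+1]$ followed by Tonelli and the defining bound \eqref{sd_d1}:
\[
\int_\R g_n(k)^2\frac{d\sigma}{\kappa}\le\Bigl(\int_n^{n+1}|f|^2\,dr\Bigr)\int_n^{n+1}\int_\R\frac{|P(r,k)|^2}{\kappa(k)}\,d\sigma\,dr\le K\int_n^{n+1}|f|^2\,dr.
\]
Summing over $n\ge 0$ gives $\int_\R(\sup_n g_n)^2\,d\sigma/\kappa\le K\int_0^\infty|f|^2\,dr$, and since $\log(2+r)\ge\log 2>0$ this is $\lesssim K\int_0^\infty|f|^2\log^2(2+r)\,dr$; combined with the discrete estimate this yields \eqref{sd_l2}.

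For the a.e.\ convergence \eqref{sd_l3} I would run the standard maximal-function argument. Since $\log(2+r)$ is bounded below, $f\log(2+r)\in L^2$ forces $f\in L^2(\R^+)$, so by property (b) the integrals $\int_0^R fP\,dr$ form a Cauchy net in $L^2_\sigma$ and converge there to a limit $\hat f$, which is what $\int_0^\infty fP\,dr$ denotes. Approximating $f$ by truncations $f_a\df f\mathbf 1_{[0,a]}$, for which $S_{f_a}(t,k)$ is constant once $t\ge a$, the Cauchy oscillation $\omega_f(k)\df\limsup_{R,R'\to\infty}|S(R,k)-S(R',k)|$ satisfies $\omega_f=\omega_{f-f_a}\le 2\sup_{t>0}|S_{f-f_a}(t,k)|$. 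Applying \eqref{sd_l2} to $f-f_a$ bounds $\int_\R\omega_f^2\,d\sigma/\kappa$ by a constant times $\int_a^\infty|f|^2\log^2(2+r)\,dr\to0$; as the left side is independent of $a$ it must vanish, so (using $\kappa>0$) $\omega_f=0$ for $\sigma$-a.e.\ $k$ and the pointwise limit exists. Passing to a subsequence realizing the $L^2_\sigma$ convergence identifies this limit with $\hat f$.

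The step I expect to be the crux is the oscillation term: the discrete bound of Theorem \ref{main0} says nothing about times strictly between consecutive integers, and it is precisely the normalization \eqref{sd_d1}—the uniform control $\sup_r\|P(r,\cdot)\|^2_{L^2_{\sigma/\kappa}}\le K$—that makes the within-interval supremum summable. The remaining care is bookkeeping: checking the log-weight is harmless (it only needs a positive lower bound) and tracking how the two constants $\|\kappa^{-1}\|_{L^\infty}$ and $K$ enter the final bound.
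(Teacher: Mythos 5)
Your proof of the maximal inequality \eqref{sd_l2} is essentially the paper's own argument: the same splitting of $\sup_{t>0}$ into the integer-time maximal function (handled by Theorem \ref{main0} and $\kappa^{-1}\in L^\infty$) plus the within-interval oscillation, which is dominated by $\bigl(\sum_n g_n^2\bigr)^{1/2}$ and then controlled via Cauchy--Schwarz on $[n,n+1]$, Tonelli, and the normalization bound \eqref{sd_d1} to produce the constant $K$. This part is correct and matches the paper line for line, including the observation that $\log(2+r)\ge\log 2$ absorbs the unweighted $\|f\|_2^2$.

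For the a.e.\ convergence \eqref{sd_l3} you take a genuinely different, though equally valid, route. The paper does not invoke the maximal inequality at all here: it writes $\int_0^r=\int_0^{[r]}+\int_{[r]}^r$, notes that the first piece converges $\sigma$-a.e.\ by Theorem \ref{main0}, and kills the second piece by showing that $\sum_n\bigl(\int_n^{n+1}|fP|\,dr\bigr)^2$ has finite integral against $d\sigma/\kappa$ (again via \eqref{sd_d1}), hence converges $\sigma$-a.e., so its terms tend to zero. You instead run the classical ``maximal inequality implies a.e.\ convergence'' scheme: truncate $f$ to $f\mathbf{1}_{[0,a]}$, bound the Cauchy oscillation $\omega_f=\omega_{f-f_a}$ by twice the maximal function of the tail, apply \eqref{sd_l2} to $f-f_a$, and let $a\to\infty$; positivity and finiteness of $\kappa$ then give $\omega_f=0$ $\sigma$-a.e., and a subsequence of the $L^2_\sigma$-convergent net identifies the limit. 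Both arguments are correct and of comparable length; yours is the more standard template and makes the logical dependence on the maximal inequality explicit, while the paper's is slightly more economical in that it reuses the summability estimate already needed for \eqref{sd_l2} and avoids the truncation/density step. No gaps.
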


 One example of  continuous orthonormal system is given by solutions $\{P(r,k)\}$ to the Krein system \cite{d1,krein}. The Krein system is the following linear system of differential equations
\begin{equation}\label{sa}
\left\{
\begin{array}{cc}
P'(r,k)=ikP(r,k)-\overline{A(r)}P_*(r,k), & P(0,k)=1\\
P_*'(r,k)=-{A}(r)P(r,k), & P_*(0,k)=1
\end{array}, \quad k\in \mathbb{C}, \quad r\ge 0\,.
\right.
\end{equation}
In this paper, we will always assume that the coefficient $A\in L^2_{\rm loc}(\R^+)$.
The Cauchy problem \eqref{sa} has the unique solution $(P(r,k),P_*(r,k))$. In \cite{krein}  (see also, e.g., \cite{den1}), Krein showed that $\{P(r,k)\}$ with $r\ge 0$ and $k\in \R$ can be viewed as continuous analogs of polynomials, orthogonal on the unit circle. In particular, there is a  measure $\sigma$ on $\R$, which satisfies
\[
\int_{\mathbb{R}}\frac{d\sigma(k)}{1+k^2}<\infty\,,
\]
and the following  property
\begin{equation}\label{ort1}
\int_{\mathbb{R}}\left|   \int_0^a f(r)P(r,k)dr\right|^2{d\sigma}=\int_0^a |f(r)|^2dr
\end{equation}
holds for every $f\in L^2(\mathbb{R}^+)$. In other words, a pair $(P,\sigma)$ gives an example of  continuous orthonormal system. Notice that \eqref{ort1} allows us to define the generalized Fourier transform
\[
\int_0^\infty f(r)P(r,k)dr
\]
as an element of $L^2_\sigma(\R)$.

Under a mild extra assumption on coefficient $A$, the system $(P,\sigma)$ becomes normalized and the previous theorem can be applied. More precisely, the following lemma holds.

\begin{lemma}\label{sd_al}Suppose the coefficient $A$ in Krein system belongs to the Stummel class, i.e.,  \begin{equation}
\|A\|_{\rm St}\df \sup_{r\ge 0}\left(\int_r^{r+1}|A(\rho)|^2d\rho\right)^{1/2}<\infty\,.
\end{equation}
Then, 
\begin{equation}\label{sd_l21}
\sup_{r>0}\int_{\R} \frac{|P(r,k)|^2}{1+k^2}d\sigma \lesssim  1+\|A\|_{\rm St}^2\,.
\end{equation}
Moreover, we have \eqref{sd_l2} and \eqref{sd_l3} with $\kappa(k)=1+k^2$ and $K\lesssim 1+\|A\|_{\rm St}^2$.
\end{lemma}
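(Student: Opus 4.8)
The plan is to establish the uniform bound \eqref{sd_l21}; granting it, the two displayed conclusions are immediate, since they are precisely \eqref{sd_l2} and \eqref{sd_l3} of Theorem~\ref{main1} applied to the normalized system $(P,\sigma,\kappa)$ with $\kappa(k)=1+k^2$ (so $\|\kappa^{-1}\|_{L^\infty(\R)}=1$) and $K\lesssim 1+\|A\|_{\rm St}^2$. So everything reduces to \eqref{sd_l21}.

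Write $I(r)\df\int_\R\frac{|P(r,k)|^2}{1+k^2}\,d\sigma$. The guiding remark is that the single power of the weight is the square of a resolvent factor: since $|1-ik|^2=1+k^2$ for real $k$, we have $\frac{|P(r,k)|^2}{1+k^2}=\big|\frac{P(r,k)}{1-ik}\big|^2$. This is what lets the isometry \eqref{ort1} be used, because $\frac{1}{1-ik}=\int_0^\infty e^{-s}e^{iks}\,ds$ is a Laplace symbol. More generally I carry a free parameter $\lambda\ge1$: from $\frac{1}{1+k^2}\le\frac{\lambda^2}{\lambda^2+k^2}$ one gets $I(r)\le\lambda^2 I_\lambda(r)$, where $I_\lambda(r)\df\int_\R\frac{|P(r,k)|^2}{\lambda^2+k^2}\,d\sigma=\int_\R\big|\frac{P(r,k)}{\lambda-ik}\big|^2\,d\sigma$. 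The parameter $\lambda$ will be tuned to $\|A\|_{\rm St}$ at the end; this is the device that makes the estimate uniform in the size of the coefficient.

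Next I produce an exact representation of $\frac{P(r,k)}{\lambda\mp ik}$ as an isometric image plus a small error. Integrating the Krein system \eqref{sa} by parts against the weight $e^{-\lambda(t-r)}$ on a finite window $[r,R]$ yields
\[
\frac{P(r,k)}{\lambda-ik}=\int_r^R e^{-\lambda(t-r)}P(t,k)\,dt+\frac{e^{-\lambda(R-r)}P(R,k)}{\lambda-ik}+\frac{1}{\lambda-ik}\int_r^R e^{-\lambda(t-r)}\overline{A(t)}P_*(t,k)\,dt,
\]
together with the mirror identity for $\frac{P(r,k)}{\lambda+ik}$ integrated over $[0,r]$ (with a boundary term at $0$). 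The first term is $\int_0^R h(t)P(t,k)\,dt$ with $h(t)=e^{-\lambda(t-r)}\mathbf 1_{[r,R]}$, so by \eqref{ort1} its $L^2_\sigma$-norm is $\|h\|_{L^2}\le(2\lambda)^{-1/2}$; this is the entire contribution in the free case $A\equiv0$. The boundary term is exponentially small in $R-r$. For the last term I use Cauchy--Schwarz in $t$ and then Tonelli: the factor $\int_r^R e^{-\lambda(t-r)}|A(t)|^2\,dt\le C\|A\|_{\rm St}^2$ (uniformly in $\lambda\ge1$, by summing over unit intervals) comes out, and the remaining $k$-integral collapses using $|P_*(t,k)|=|P(t,k)|$ — which holds because $\frac{d}{dr}\big(|P|^2-|P_*|^2\big)=0$ by \eqref{sa} and $P(0,\cdot)=P_*(0,\cdot)=1$ — to give $\int_r^R e^{-\lambda(t-r)}I_\lambda(t)\,dt\le\lambda^{-1}\sup_{t\le R}I_\lambda(t)$. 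Hence the error is at most $(C/\lambda)^{1/2}\|A\|_{\rm St}\big(\sup_{t\le R}I_\lambda(t)\big)^{1/2}$.

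Combining the forward and backward identities gives a closed inequality for $m_\lambda(R)\df\sup_{0\le r\le R}I_\lambda(r)$, which is finite for each fixed $R$ by the crude transfer-matrix bound $|P(t,k)|\le e^{C\|A\|_{\rm St}(1+t)}$ together with $\int_\R\frac{d\sigma}{1+k^2}<\infty$; schematically,
\[
m_\lambda(R)^{1/2}\le C_0\lambda^{-1/2}+\big(e^{-\lambda R}+C_1\lambda^{-1/2}\|A\|_{\rm St}\big)\,m_\lambda(R)^{1/2}.
\]
Here is the crux: with the fixed weight $\lambda=1$ the coefficient $\sim\|A\|_{\rm St}$ exceeds $1$ once $\|A\|_{\rm St}$ is large, and the term cannot be absorbed — the naive bootstrap only closes for small coefficients. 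Choosing instead $\lambda\df\max\big(1,(2C_1)^2\|A\|_{\rm St}^2\big)$ forces $C_1\lambda^{-1/2}\|A\|_{\rm St}\le\tfrac12$, and taking $R$ large makes $e^{-\lambda R}\le\tfrac14$; absorbing then gives $m_\lambda(R)\lesssim\lambda^{-1}$ uniformly in $R$, so $\sup_r I_\lambda(r)\lesssim\lambda^{-1}$ after $R\to\infty$. Finally $\sup_r I(r)\le\lambda^2\sup_r I_\lambda(r)\lesssim\lambda\lesssim1+\|A\|_{\rm St}^2$, which is \eqref{sd_l21}. The main obstacle is precisely this uniformity in $\|A\|_{\rm St}$, resolved by the free parameter $\lambda$; the only other technical point — guaranteeing $m_\lambda(R)<\infty$ before absorbing — is what forces me to work on finite windows and track the (harmless, exponentially small) boundary terms.
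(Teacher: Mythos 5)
Your proof is correct, but it takes a genuinely different route from the paper's. The paper proves \eqref{sd_l21} by identifying $\rho\int_{\R}\frac{|P(2x,k)|^2}{k^2+\rho^2}\,d\sigma_d$ with $\Im\bigl(G_{11}(x,x,i\rho)+G_{22}(x,x,i\rho)\bigr)$, the diagonal of the Dirac Green's function at spectral parameter $i\rho$, and then bounds $G(x,y,i\rho)$ by iterating the second resolvent identity into a Neumann series around the free Dirac resolvent; the series converges once $\rho\sim 1+\|A\|_{\rm St}^2$, each term gaining a factor $C\|A\|_{\rm St}\rho^{-1/2}$. You prove the same intermediate estimate $\sup_r\int_\R\frac{|P(r,k)|^2}{\lambda^2+k^2}\,d\sigma\lesssim\lambda^{-1}$ for $\lambda\gtrsim 1+\|A\|_{\rm St}^2$, but by a direct manipulation of the Krein ODE: the Laplace-weighted integration by parts, the isometry \eqref{ort1} for the main term, $|P_*|=|P|$ on $\R$ (which is just \eqref{sd_l33}) for the error term, and a one-step absorption in place of the full perturbation series — your smallness condition $C_1\lambda^{-1/2}\|A\|_{\rm St}\le\tfrac12$ is exactly the paper's convergence condition for the series. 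Your version is more self-contained (no eigenfunction expansion of the resolvent kernel, no free Green's function estimate), at the cost of needing the a priori finiteness of $m_\lambda(R)$ and the finite-window bookkeeping; the paper's version is heavier but yields the off-diagonal decay $\|G(x,y,i\rho)\|\lesssim e^{-\rho|x-y|/2}$ as a byproduct. One point you should make explicit: the schematic inequality with the single factor $e^{-\lambda R}$ only comes out if you use the forward identity for $r\le R/2$ and the backward one for $r\ge R/2$ (so that the boundary weight is $\le e^{-\lambda R/2}$ in both cases); using either identity alone for all $r\in[0,R]$ leaves a boundary coefficient of order $1$ near one endpoint and the absorption fails there. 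With that case split spelled out, the argument closes as you describe.
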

\noindent The proof of this Lemma is given in Appendix.\medskip

Another application of our general results to the Krein systems is given in the following Lemma.

\begin{lemma}\label{l1}   Suppose the coefficient in Krein system satisfies $A(r)\log(2+r)\in L^2(\mathbb{R}^+)$, then
\begin{eqnarray}\label{sd1}
\int_{\mathbb{R}} \left( \sup_{\rho<r_1<r_2} \left|\int_{r_1}^{r_2}A(x)P(x,k)dx \right|\right)^2\frac{d\sigma}{1+k^2}=\\\int_{\mathbb{R}} \left( \sup_{\rho<r_1<r_2} \left|P_*(r_2,k)-P_*(r_1,k) \right|\right)^2\frac{d\sigma}{1+k^2}\lesssim (1+\|A\|_2^2) \int_\rho^\infty |A(r)|^2\log^2(2+r)dr\,, \,\rho>0\,.\nonumber
\end{eqnarray}
Moreover, for Lebesgue a.e. $k\in \mathbb{R}$, there is a limit $\Pi(k)=\lim_{r\to\infty}P_*(r,k)$.
\end{lemma}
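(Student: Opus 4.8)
The plan is to handle the stated equality by a one-line integration, to bound the common maximal quantity by feeding a truncated coefficient into Lemma~\ref{sd_al}, and finally to extract the pointwise limit by combining the convergence assertion \eqref{sd_l3} with a spectral positivity fact.

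\textbf{The equality.} The second line of the Krein system \eqref{sa} reads $P_*'(r,k)=-A(r)P(r,k)$, so integrating in $r$ from $r_1$ to $r_2$ gives
\[
P_*(r_2,k)-P_*(r_1,k)=-\int_{r_1}^{r_2}A(x)P(x,k)\,dx .
\]
Taking absolute values shows the two integrands under the supremum coincide for every $k$, so the first equality in \eqref{sd1} is immediate and it remains to bound either side.

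\textbf{The inequality.} Fix $\rho>0$ and set $f_\rho\df A\,\mathbf{1}_{(\rho,\infty)}$ and $G(t,k)\df\int_0^t f_\rho(x)P(x,k)\,dx$. For $t\le\rho$ we have $G(t,k)=0$, while for $t>\rho$, $G(t,k)=\int_\rho^t A(x)P(x,k)\,dx$, whence the triangle inequality gives, for every $k$,
\[
\sup_{\rho<r_1<r_2}\Big|\int_{r_1}^{r_2}A(x)P(x,k)\,dx\Big|
=\sup_{\rho<r_1<r_2}|G(r_2,k)-G(r_1,k)|\le 2\sup_{t>0}|G(t,k)| .
\]
Since $A(r)\log(2+r)\in L^2(\R^+)$ and $\log(2+r)\ge\log 2>0$, the coefficient $A$ lies in $L^2(\R^+)$, hence in the Stummel class with $\|A\|_{\rm St}\le\|A\|_2$. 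Therefore Lemma~\ref{sd_al} applies with $\kappa(k)=1+k^2$, $\|\kappa^{-1}\|_{L^\infty}=1$ and $K\lesssim 1+\|A\|_{\rm St}^2\le 1+\|A\|_2^2$; applied to the test function $f_\rho$ (which satisfies $f_\rho\log(2+r)\in L^2$) it yields
\[
\int_{\R}\sup_{t>0}|G(t,k)|^2\,\frac{d\sigma}{1+k^2}
\lesssim (1+\|A\|_2^2)\int_0^\infty|f_\rho(r)|^2\log^2(2+r)\,dr
=(1+\|A\|_2^2)\int_\rho^\infty|A(r)|^2\log^2(2+r)\,dr .
\]
Squaring the pointwise bound and integrating it against $d\sigma/(1+k^2)$ then gives \eqref{sd1}, the harmless factor $4$ being absorbed into $\lesssim$.

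\textbf{The limit.} Taking $\rho=0$, the function $f=A$ satisfies the hypothesis of Lemma~\ref{sd_al}, so by \eqref{sd_l3} the integrals $\int_0^R A(r)P(r,k)\,dr$ converge as $R\to\infty$ for $\sigma$-a.e. $k$. Since $P_*(R,k)=1-\int_0^R A(r)P(r,k)\,dr$, the limit $\Pi(k)=\lim_{R\to\infty}P_*(R,k)$ exists for $\sigma$-a.e. $k$. To pass from $\sigma$-a.e. to Lebesgue-a.e., I would invoke the continuous analog of Szeg\H{o}'s theorem for Krein systems: because $A\in L^2(\R^+)$, the absolutely continuous part $w$ of $\sigma=w\,dk+\sigma_s$ satisfies $\int_{\R}\frac{\log w(k)}{1+k^2}\,dk>-\infty$, and in particular $w(k)>0$ for Lebesgue-a.e. $k$. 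Consequently Lebesgue measure is absolutely continuous with respect to $\sigma$, so every $\sigma$-null set is Lebesgue-null, and $\Pi(k)$ exists for Lebesgue-a.e. $k$. I expect this last upgrade to be the only genuinely nontrivial point, since it rests on the spectral fact that an $L^2$ coefficient forces the spectral density to be positive almost everywhere; the remainder is a bookkeeping application of Lemma~\ref{sd_al} and the fundamental theorem of calculus.
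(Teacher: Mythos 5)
Your proof is correct and follows essentially the same route as the paper: the equality comes from integrating the second Krein equation, the maximal-function bound comes from Theorem \ref{main1} (made applicable via Lemma \ref{sd_al}, with your truncation $f_\rho$ and factor $2$ making explicit what the paper leaves implicit), and the Lebesgue-a.e.\ upgrade rests on the positivity a.e.\ of the spectral density, which the paper encodes through the outer function $\Pi$ in \eqref{sd_l1} and which you phrase equivalently via the Szeg\H{o} condition. The only point the paper adds that you omit is the identification, via \eqref{sd_uu}, of the pointwise limit with the boundary values of the analytic function $\Pi$ from \eqref{li}, but the statement as written does not require this.
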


Theorem \ref{main0}, Theorem \ref{main1} and Lemma \ref{l1} are proved in the second section. In section 3, we apply Lemma \ref{l1} to show existence of wave operators for Schr\"odinger evolution which is our central result. Consider \[H=-\partial_{xx}^2+v\] on $\mathbb{R}^+$ with Dirichlet boundary condition at zero and denote by $H_0=-\partial_{xx}^2$ the free Schr\"odinger operator with the same Dirichlet condition at zero. The Moller wave operators (see, e.g., \cite{ya}) are defined by
\[
W^{\pm}(H,H_0)\df \lim_{t\to\pm\infty} e^{itH}e^{-itH_0}\,,
\]
where the limit is the strong limit in $L^2(\mathbb{R}^+)$. The  main result of our paper is the following theorem.
\begin{theorem}\label{t2}
Suppose $v=a'+q$ where $q\in L^1(\mathbb{R}^+)$, $a$ is absolutely continuous on $\R^+$, and
\begin{equation}
a'\in L^\infty(\mathbb{R}^+), \quad a\log(2+r)\in L^2(\mathbb{R}^+)\,.
\end{equation}
Then, the wave operators $W^{\pm}(H,H_0)$ exist.
\end{theorem}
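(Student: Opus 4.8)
The plan is to reduce the problem to a Krein system by a supersymmetric factorization, apply Lemma \ref{l1} to obtain the scattering asymptotics of the generalized eigenfunctions, and then assemble $W^\pm(H,H_0)$ through the multiplicative chain rule.

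First I would record the \emph{a priori} consequences of the hypotheses. Since $a$ is Lipschitz ($a'\in L^\infty$) and $a\log(2+r)\in L^2(\R^+)$ forces $a\in L^2(\R^+)$, a one-line argument bounds $\|a\|_{L^\infty}$ in terms of $\|a\|_2$ and the Lipschitz constant; hence $a\in L^\infty\cap L^2$ and $a^2\in L^1(\R^+)$. Writing $q_1\df q-a^2\in L^1(\R^+)$ gives the decomposition $H=H_a+q_1$ with $H_a\df-\partial_{xx}^2+a'+a^2$. The point of completing to $a'+a^2$ is that $H_a$ factorizes: with $D\df-\partial_x+a$ and $D^*=\partial_x+a$ (Dirichlet condition at $0$) one has $H_a=D^*D$, while $\tilde H_a\df DD^*=-\partial_{xx}^2-a'+a^2$ is its supersymmetric partner. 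Consequently the Dirac operator $\mathcal D=\begin{pmatrix}0&D^*\\ D&0\end{pmatrix}$ satisfies $\mathcal D^2=\diag(H_a,\tilde H_a)$, and its eigenvalue problem $\mathcal D\Psi=k\Psi$ is the first-order system $f'=af-kg$, $g'=kf-ag$. Passing to $w=f+\ic g$, $u=f-\ic g$ turns this into $w'=\ic k w+au$, $u'=-\ic k u+aw$, which, after the gauge rotation standard in Krein's theory (\cite{krein,d1,den1}), is a Krein system \eqref{sa} whose coefficient $A$ is obtained from $a$ by a bounded, boundedly invertible transformation. In particular $A\log(2+r)\in L^2(\R^+)$ and $\|A\|_2<\infty$, so Lemmas \ref{sd_al} and \ref{l1} apply to the associated $(P,\sigma)$.

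Next I would extract $W^\pm(H_a,H_0)$ from Lemma \ref{l1}. That lemma gives, for a.e. $k\in\R$, the limit $\Pi(k)=\lim_{r\to\infty}P_*(r,k)$, together with the maximal estimate \eqref{sd1} controlling the increments $P_*(r_2,k)-P_*(r_1,k)$ in $L^2(d\sigma/(1+k^2))$. Translating back through the factorization, the existence of $\Pi(k)$ means that the regular solution of $H_au=k^2u$ has a genuine scattering asymptotic of the form $u(x,k)\sim c(k)\sin\!\big(kx+\eta(k)\big)$ as $x\to\infty$, with amplitude and phase determined by $\Pi(k)$; one also checks $\Pi(k)\ne0$ for a.e. $k$, so that $H_a$ has purely a.c. spectrum $[0,\infty)$ of multiplicity one. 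Inserting these asymptotics into the stationary (generalized Fourier) representation of $e^{\ic tH_a}e^{-\ic tH_0}$ and using \eqref{sd1} as the dominating function, a dominated-convergence argument produces the strong limits $W^\pm(H_a,H_0)$.

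Finally, $q_1\in L^1(\R^+)$ is a short-range perturbation of $H_a$, so $W^\pm(H,H_a)$ exists by classical scattering theory (trace-class/limiting-absorption methods), and the chain rule $W^\pm(H,H_0)=W^\pm(H,H_a)\,W^\pm(H_a,H_0)$ — valid as soon as both factors exist — gives the theorem. I expect the crux to be the passage from the pointwise-in-$k$ convergence $P_*(r,k)\to\Pi(k)$ to the strong operator convergence defining $W^\pm(H_a,H_0)$: one must identify the Krein measure $\sigma$ with the spectral measure of $H_a$ under $k\mapsto k^2$, write the spectral transforms of $H_a$ and $H_0$ in compatible coordinates, and justify the interchange of limits using precisely \eqref{sd1} to dominate. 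Verifying the coefficient transfer $a\mapsto A$ so that the $\log$-weighted $L^2$ bound survives, and controlling the Dirichlet boundary terms in the factorization, are the remaining technical points.
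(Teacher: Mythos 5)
Your outer skeleton coincides with the paper's: complete the square to $H_a=-\partial_{xx}^2+a'+a^2$, dispose of the $L^1$ remainder $q-a^2$ by Birman--Kuroda plus the chain rule, and pass to the Dirac/Krein picture in which $a(x)=2A(2x)$, so the weighted bound $a\log(2+r)\in L^2(\R^+)$ transfers to $A$ and Lemma \ref{l1} supplies both the limit $\Pi(k)=\lim_{r\to\infty}P_*(r,k)$ and the maximal estimate \eqref{sd1}. The genuine gap is exactly at the point you yourself flag as the crux and then dispatch in one sentence: ``a dominated-convergence argument produces the strong limits $W^\pm(H_a,H_0)$.'' That step is the entire content of Theorem \ref{sd_t1}, and it is not a dominated-convergence argument. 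After writing $e^{-itH_0}f$ in its large-time form (Lemma \ref{a6}) and applying the generalized Fourier transform of the Dirac operator, one must show that the oscillatory integral $I$ of \eqref{sd_5} converges in $L^2(\mathbb{R},2\sigma)$. Its integrand pairs the chirp $e^{ix^2/(4t)}$ with $e^{\pm ikx}P_*(2x,k)$, and the stationary point $x=2kt$ escapes to infinity with $t$; the limit is therefore extracted by integrating by parts against $P_*'=-AP$ (producing a boundary term $P_*(4bt,k)\to\Pi\cdot\chi_{E_s^c}$ in $L^2(\sigma)$) and then by a genuine stationary-phase analysis of the remainder $J_2$: a partition of unity around the stationary point, $L^1$ control of Fourier transforms of truncated Fresnel integrals (Lemma \ref{a2}), the oscillatory estimate of Lemma \ref{a1}, and --- at exactly one place, estimate \eqref{key} --- the maximal function from \eqref{sd1}. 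Lemma \ref{prince} is then needed to upgrade convergence on compact $k$-intervals plus conservation of norm to convergence in $L^2(2\sigma)$. None of this is recoverable from ``pointwise convergence of $P_*$ plus a square-integrable dominating function,'' because $I$ is not a fixed integral of a pointwise-convergent, dominated family in the limit $t\to\infty$.

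Two smaller points. Your assertion that $\Pi(k)\ne 0$ a.e.\ forces $H_a$ to have purely absolutely continuous spectrum is both unjustified and unnecessary: the orthogonality measure $\sigma$ may carry a singular part, and the paper's limiting function comes with the factor $\chi_{E_s^c}$ precisely to account for it. And the ``bounded, boundedly invertible transformation'' from $a$ to $A$ is just the rescaling $A(r)=\tfrac12 a(r/2)$ (real $A$, $b=0$); this should be made explicit, since the survival of the $\log$-weighted $L^2$ bound under this rescaling is what makes Lemma \ref{l1}, and hence hypothesis \eqref{extra} of Theorem \ref{sd_t1}, available.
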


The existence of wave and modified wave operators for Schr\"odinger and Dirac equations was extensively studied in  the scattering theory of wave propagation, see, e.g., the classical papers by Agmon \cite{a1}, H\"ormander \cite{hor}, and a book by T. Kato  \cite{kato} on the subject. The case $v\in L^p(\mathbb{R}^+), 1\le p<2$ was considered in \cite{christ} where the existence of modified wave operators was proved. See \cite{dd1} for later developments. In \cite{den1}, the presence of wave operators was established for Dirac equation with potential in $L^2(\mathbb{R}^+)$. This result is optimal on $L^p(\mathbb{R}^+)$ scale. For more general potentials in Dirac equation and connection to Szeg\H{o} condition on measure $\sigma$, see  \cite{bes}.
Some related recent results, including the multidimensional setting, can be found in, e.g., \cite{du,e,l}.

\medskip

{\bf Notation}
\begin{enumerate}
\item{If $f$ is defined on $\mathbb{R}$, $\widehat f$ denotes its Fourier transform:
\[
\widehat f(k)\df \frac{1}{\sqrt{2\pi}}\int_{\mathbb{R}}f(x)e^{- ikx}dx\,.
\]
The inverse Fourier transform is defined as 
\[
\widecheck f(k)=f^{\vee}(k)\df \frac{1}{\sqrt{2\pi}}\int_{\mathbb{R}}f(x)e^{ ikx}dx\,.
\]
} 
\item{Symbol $C^\infty (\mathbb{R})$ stands for infinitely smooth functions defined on the real line and $C_c^\infty (\mathbb{R})$ denotes the space of smooth functions with compact support.}

\item{We will use the symbol $C_{(a_1,\ldots, a_k)}$ to indicate a nonnegative function which depends on parameters $(a_1,\ldots, a_k)$. The actual value of $C$ can change from one formula to another.}

\item{If $E$ is a set on the real line, $E^c$ denotes its complement.}

\item For two non-negative functions
$f_{1(2)}$, we write $f_1\lesssim f_2$ if  there is an absolute
constant $C$ such that
\[
f_1\le Cf_2
\]
for all values of the arguments of $f_{1(2)}$. We define $\gtrsim$
similarly and say that $f_1\sim f_2$ if $f_1\lesssim f_2$ and
$f_2\lesssim f_1$ simultaneously.

\item If $f_2$ is non-negative function and $|f_1|\lesssim f_2$, we write $f_1=O(f_2)$.

\end{enumerate}\bigskip

\section{Menchov-Rademacher Theorem for continuous orthogonal systems}

We start by giving the proof to Theorem \ref{main0}. It is a direct adaptation of the proof of Menchov-Rademacher Theorem  in \cite{ks} but we present it here for the reader's convenience.
\begin{proof}[\bf Proof of Theorem \ref{main0}]
For $j\in\N$, let $P_j(k)=\int_{2^{j-1}}^{2^j}f(r)P(r,k)dr$ and $$S'_j(k)=\sum_{l=1}^jP_l(k)=\int_1^{2^j}f(r)P(r,k)dr\,.$$
Now, $$\|P_j\|_{L^2_\sigma(\R)}^2=\int_\R\left|\int_{2^{j-1}}^{2^j}f(r)P(r,k)dr\right|^2d\sigma(k)=\int_{2^{j-1}}^{2^j}|f(r)|^2dr$$ and so \begin{equation}\sum_{j\in\N}j^2\|P_j\|_{L^2_\sigma(\R)}^2\sim \int_1^\infty |f(r)|^2\log^2(2+r)dr\,.\label{sd_l8}
\end{equation} For any $a>0$, we have 
\[
\sum_{j\in \N}\int_{-a}^a|P_j(k)|{d\sigma(k)}\leq\sum_{j\in\N}\left(\int_{-a}^a|P_j(k)|^2d\sigma(k)\right)^{1/2}\left(\int_{-a}^a{d\sigma(k)}\right)^{1/2}
\le
\]
\[ \sqrt{\sigma([-a,a])} \sum_{j\in\N}\|P_j\|_{L^2_\sigma(\R)}jj^{-1}\leq 
\sqrt{\sigma([-a,a])}\left(\sum_{j\in\N}j^2\|P_j\|_{L^2_\sigma(\R)}^2\right)^{1/2}\left(\sum_{j\in\N}j^{-2}\right)^{1/2}\label{sd_l22}
\]
\[
\lesssim  \sqrt{\sigma([-a,a])}\left(\int_{\R^+}|f(r)|^2\log^2(2+r)dr\right)^{1/2}=\sqrt{\sigma([-a,a])}L^{1/2}\,.
\]
Since $a$ is arbitrary large, by the theorem of Beppo Levi, $\sum_{j\in\N}|P_j(k)|$ converges for $\sigma$-a.e. $k$, as does $\{S'_j(k)\}$.

Let $S'(k)\df \sup_{j\in\N}|S'_j(k)|$ be the maximal function over dyadic partial sums. Since $S'(k)\leq\sum_{j\in\N}|P_j(k)|$,  we have
\begin{align}\label{sd_l11}
\|S'\|_{L^2_\sigma(\R)}&\leq\left\|\sum_{j\in\N}|P_j|\right\|_{L^2_\sigma(\R)}
\leq\sum_{j\in\N}\|P_j\|_{L^2_\sigma(\R)}=\sum_{j\in\N}j^{-1}j\|P_j\|_{L^2_\sigma(\R)}
\lesssim L^{1/2}\,
\end{align}
after applying Cauchy-Schwarz inequality and \eqref{sd_l8}.

For $n\in\{0,1,2,\ldots,2^N\}$, we can write $n=\sum_{m=0}^N\epsilon_m(n)2^{N-m}$ with $\epsilon_m(n)\in\{0,1\}$. For $j\in\{0,1,\ldots,N\}$, let $n_j=\sum_{m=0}^j\epsilon_m(n)2^{N-m}$.

Noting that $\left|\sum_{j=1}^Nx_j\right|^2\le N\sum_{j=1}^N|x_j|^2$, we have:
\begin{align*}
\left|\int_{2^N}^{2^N+n}f(r)P(r,k)dr\right|^2=\left|\sum_{j=1}^N\int_{2^N+n_{j-1}}^{2^N+n_j}f(r)P(r,k)dr\right|^2\le \\ N\sum_{j=1}^N\left|\int_{2^N+n_{j-1}}^{2^N+n_j}f(r)P(r,k)dr\right|^2
\leq N\sum_{j=1}^N\sum_{p=0}^{2^j-1}\left|\int_{2^N+p2^{N-j}}^{2^N+(p+1)2^{N-j}}f(r)P(r,k)dr\right|^2
\end{align*}
and the last expression does not depend on $n$.
Let \[S_j''(k)\df \sup_{0\le n\leq 2^{j}}\left|\int_{2^j}^{2^j+n}f(r)P(r,k)dr\right|\]
denote the maximal function over dyadic interval $[2^j, 2^{j+1}]$.
We apply the above estimate to get
\begin{align}\label{sd_l12}
\|S_N''\|^2_{L^2_\sigma(\R)}&=\int_{\R}\sup_{0\leq n\leq 2^{N}}\left|\int_{2^N}^{2^N+n}f(r)P(r,k)dr\right|^2d\sigma(k)\\\nonumber
&\le\int_{\R}N\sum_{j=1}^N\sum_{p=0}^{2^j-1}\left|\int_{2^N+p2^{N-j}}^{2^N+(p+1)2^{N-j}}f(r)P(r,k)dr\right|^2d\sigma(k)\\ \nonumber
&= N\sum_{j=1}^N\sum_{p=0}^{2^j-1}\int_{\R}\left|\int_{2^N+p2^{N-j}}^{2^N+(p+1)2^{N-j}}f(r)P(r,k)dr\right|^2d\sigma(k)\\ \nonumber
&=N\sum_{j=1}^N\sum_{p=0}^{2^j-1}\int_{2^N+p2^{N-j}}^{2^N+(p+1)2^{N-j}}|f(r)|^2dr=N^2\int_{2^N}^{2^{N+1}}|f(r)|^2dr\,.
\end{align}
Taking $S''=\sup_{j\in\N}S_j''$, we note that $S''\le \left(\sum_{j\in \N}|S_j''|^2\right)^{1/2}$ so
\[
\|S''\|_{L^2_\sigma(\R)}\lesssim \left(\sum_{j\in \N}j^2\int_{2^j}^{2^{j+1}}|f(r)|^2dr   \right)^{1/2}\lesssim L^{1/2}.
\]
Finally, we have
\begin{align*}
\|M\|^2_{L^2_\sigma(\R)}&\lesssim  \int_0^1 |f(r)|^2dr+
\int_\R\sup_{j\in\N}\left|\int_1^{2^j}f(r)P(r,k)dr\right|^2d\sigma(k)+&\\\int_\R\sup_{j\in\N}\sup_{2^j\leq n\leq 2^{j+1}}\left|\int_{2^j}^nf(r)P(r,k)dr\right|^2d\sigma(k)
&=\int_0^1 |f(r)|^2dr+\|S'\|^2+\|S''\|^2\lesssim L\,.
\end{align*}
Convergence of the sequence $\left\{\displaystyle \int_{0}^nf(r)P(r,k)dr\right\}$ for $\sigma$-a.e. $k$ follows from convergence of $\{S_j'(k)\}$ established above  and the estimate
$\displaystyle 
\int_{\R}\sum_{j\in \N}|S_j''|^2d\sigma\lesssim L
$
which yields convergence of $\sum_{j\in \N}|S_j''|^2$ for $\sigma$-a.e. $k$.

\end{proof}

\begin{proof}[\bf Proof of Theorem \ref{main1}] We have
\[
\int_\R\sup_{t\in\R^+}\left|\int_0^tf(r)P(r,k)dr\right|^2\frac{d\sigma(k)}{\kappa(k)}=\int_\R\sup_{t\in\R^+}\left|\int_0^{[t]}f(r)P(r,k)dr+\int_{[t]}^tf(r)P(r,k)dr\right|^2\frac{d\sigma(k)}{\kappa(k)}
\]
\[
\lesssim \|\kappa^{-1}\|_{L^\infty(\R)}\int_\R\sup_{n\in\N}\left|\int_0^nf(r)P(r,k)dr\right|^2d\sigma(k)+\int_\R\sup_{t\in\R^+}\left|\int_{[t]}^tf(r)P(r,k)dr\right|^2\frac{d\sigma(k)}{\kappa(k)}\,.
\]
The first integral was controlled in Theorem \ref{main0}. The second one can be estimated as follows
\begin{equation}\label{smallscale}
\int_\R \sup_{t\in\R^+} \left|\int_{[t]}^t f(r)P(r,k)dr \right|^2 \frac{d\sigma(k)}{\kappa(k)} \leq 
\int_\R \sup_{n\in \mathbb{Z}^+} \left(\int_{n}^{n+1} |f(r)P(r,k)|dr \right)^2 \frac{d\sigma(k)}{\kappa(k)}\le
\end{equation}
\[
 \int_\R \sup_{n\in \mathbb{Z}^+} \left(\left(\int_{n}^{n+1}|f|^2dr\right)\left(\int_n^{n+1} |P(r,k)|^2dr \right) \right)\frac{d\sigma(k)}{\kappa(k)}\le 
 \]
 \[\int_\R \sum_{n=0}^\infty \left(\left(\int_{n}^{n+1}|f|^2dr\right)\left(\int_n^{n+1} |P(r,k)|^2dr \right) \right)\frac{d\sigma(k)}{\kappa(k)}\le \]
 \[
 \sum_{n=0}^\infty \left(\int_{n}^{n+1}|f|^2dr\right)\left(\int_n^{n+1} \left(\int_{\R} \frac{|P(r,k)|^2}{\kappa(k)}d\sigma(k)\right)dr \right) \stackrel{\eqref{sd_d1}}{\le} K\|f\|_2^2\,,
\]
which proves \eqref{sd_l2}.

To establish \eqref{sd_l3}, we notice that 
\[
\int_0^r f(\rho)P(\rho,k)d\rho=\int_0^{[r]} f(\rho)P(\rho,k)d\rho+\int_{[r]}^{r} f(\rho)P(\rho,k)d\rho\,.
\]
The first term has a limit as $r\to\infty$ for $\sigma$-a.e. $k$ as follows from Theorem \ref{main0}. For the second one, we can write
\[
\quad \left|\int_{[r]}^{r} f(\rho)P(\rho,k)d\rho\right|\le \int_{[r]}^{[r]+1} |f(\rho)P(\rho,k)|d\rho
\]
and the last expression goes to $0$ for $\sigma$-a.e. $k$ since 
the series 
\[
\sum_{n\in \N}\left(\int_n^{n+1}|f(r)P(r,k)|dr\right)^2
\]
converges for $\sigma$-a.e. $k$. This convergence follows from the following bound
\begin{eqnarray*}
\int_{\R} \sum_{n\in \N}\left(\int_n^{n+1}|f(r)P(r,k)|dr\right)^2\frac{d\sigma}{\kappa}\le \int_{\R} \sum_{n\in \N}\left(\left(\int_n^{n+1}|f(r)|^2dr\right)\left( \int_{n}^{n+1}|P(r,k)|^2dr\right)\right)\frac{d\sigma}{\kappa}\le\\
\left(\sup_{r\ge 0} \int_{\R} \frac{|P(r,k)|^2}{\kappa}d\sigma\right) \sum_{n\in \N}\int_n^{n+1}|f(r)|^2dr\stackrel{\eqref{sd_d1}}{<}\infty\,.
\end{eqnarray*}

\end{proof}

Before giving the proof of the Lemma \ref{l1}, we list some basic properties of Krein systems which will be needed later in the text. We start by making a remark that 
\begin{equation}\label{sd_l33}
P(r,k)=e^{irk}\overline{P_*(r,k)},
\end{equation}
provided that $k\in \R$. This identity follows directly from \eqref{sa} and can be found in, e.g., \cite{d1}.

Next, we consider an important case when $A\in L^2(\mathbb{R}^+)$. In \cite{den1} (see also original Krein's paper \cite{krein}), it was shown that the following properties hold under this condition:

$\bullet$ There is a function $\Pi(k), k\in \mathbb{C}^+$ such that
\begin{equation}\label{li}
\lim_{r\to\infty}P_*(r,k)=\Pi(k)
\end{equation}
uniformly over compact sets in $\mathbb{C}^+$. This $\Pi$ is outer and the orthogonality measure $\sigma$ can be written as follows
\begin{equation}\label{sd_l1}
d\sigma=\frac{dk}{2\pi|\Pi(k)|^2}+d\sigma_s,
\end{equation}
where $\sigma_s$ is its singular part.

$\bullet$ Integrating the second equation in \eqref{sa}, we have
\begin{equation}\label{sm1}
P_*(r,k)=1-\int_0^r A(\rho)P(\rho,k)d\rho\,.
\end{equation}
Therefore
\[
1-P_*(r,k)=\int_0^r A(\rho)P(\rho,k)d\rho\to \widetilde A(k)\df \int_0^\infty A(\rho)P(\rho,k)d\rho\,,
\]
when $r\to\infty$
and convergence is in $L^2(\mathbb{R},\sigma)$ norm. On the other hand,  the formula (12.37) in \cite{den1} gives
\[
\widetilde A(k)=1-\Pi(k)\cdot \chi_{E_s^c},
\]
where $E_s^c$ denotes the complement to $E_s$, the support of $\sigma_s$. Therefore, 
 \begin{equation}\label{sd_uu}
 \lim_{r\to\infty}\|P_*(r,k)-\Pi(k)\cdot \chi_{E_s^c}\|_{2,\sigma}=0\,.
 \end{equation}

$\bullet$ From $\eqref{sm1}$ and orthogonality, we get
\[
\int_{\mathbb{R}}|P_*(r,k)-1|^2d\sigma=\int_0^r |A(\rho)|^2d\rho\,.
\]

\begin{proof}[\bf Proof of Lemma \ref{l1}] 

The second equation in \eqref{sa} gives
\begin{equation}\label{duid}
P_*(r_2,k)-P_*(r_1,k)=-\int_{r_1}^{r_2}A(r)P(r,k)dr\,.
\end{equation}
Theorem \ref{main1} yields necessary estimate on the maximal function and convergence of $P_*(r,k)$ $\sigma$-a.e. The limit is equal to $\Pi$ from \eqref{li} due to \eqref{sd_uu}.
\end{proof}

\bigskip
\section{Wave operators for Schr\"odinger evolution: proof of Theorem \ref{t2}}

We start this section by describing a connection between Krein systems and Dirac and Schr\"odinger operators on $\R^+$.
Consider the Krein system with  coefficient $A\in L^2_{\rm loc}(\mathbb{R}^+)$. It corresponds to Dirac operator 
\begin{equation}\label{sd_1}
\mathcal{D}=\left(
\begin{array}{cc}
-b & \partial_x-a\\
-\partial_x-a &b
\end{array}
\right)
\end{equation}
defined on Hilbert space $(f_1,f_2)\in L^2(\mathbb{R}^+)\times L^2(\mathbb{R}^+)$, where $a(x)=2\Re A(2x), b(x)=2\Im A(2x)$ with the boundary condition $f_2(0)=0$. Indeed, define real-valued functions $\phi$ and $\psi$ by writing $\phi(x,k)+i\psi(x,k)\df P(2x,k)e^{-ikx}$. It can be checked \cite{d1,krein} that $(\phi,\psi)$ are generalized eigenfunctions for Dirac operator  \eqref{sd_1} and that $2\sigma$ is its spectral measure.  Define $\{\mathcal E(x,k)\},x\ge 0$ by 
\begin{equation}
\mathcal{E}(x,k)\df P(2x,k)e^{-ixk}.\label{sd_l77}
\end{equation} It turns out that this is also  continuous orthonormal system with respect to $\sigma$, i.e.,
\begin{equation}\label{df2}
\int_{\mathbb{R}} \left|  \int_0^\infty f(x)\mathcal{E}(x,k)dx  \right|^2d\sigma=\|f\|_2^2\,, 
\end{equation}
for every $f\in L^2(\mathbb{R}^+)$ (see \cite{den1, krein}). 
Making an extra assumption that $A$ is real-valued, i.e., that  $b=0$, and absolutely continuous on $\R^+$ and taking the square of $\mathcal{D}$ reveals the connections between Dirac and Schr\"odinger operators. Indeed, 
\begin{equation}\label{ds}
\mathcal{D}^2=\left(
\begin{array}{cc}
H_1 & 0\\
0 & H_2
\end{array}
\right)
\end{equation}
where $H_1f=-\partial^2_{xx}f+q_1f, f'(0)+a(0)f(0)=0$, $H_2f=-\partial^2_{xx}f+q_2f, f(0)=0$, 
\[
q_1=a^2-a', q_2=a^2+a'\,.
\]
Later in the proof, we will use the spectral decomposition for Dirac $\cal{D}$ and the formula \eqref{ds} to write a suitable  expression for $e^{itH_{2}}$.

The following result implies Theorem \ref{t2} thanks to Lemma \ref{l1}.
\begin{theorem}\label{sd_t1} Suppose the coefficient $A$ in the Krein system is real and absolutely continuous, $A\in L^2(\mathbb{R}^+), A'\in L^\infty(\mathbb{R}^+)$, and 
\begin{equation}\label{extra}
\lim_{\rho\to\infty}\int_{\mathbb{R}} \left( \sup_{\rho<r_1<r_2} \left|\int_{r_1}^{r_2}A(r)P(r,k)dx \right|\right)^2\frac{d\sigma}{1+k^2}=0\,.
\end{equation}
Let $a(x)=2A(2x)$ and let $q$ be real-valued function on $\mathbb{R}^+$ satisfying $q\in L^1(\mathbb{R}^+).$ Then, taking two operators $H=-\partial_{xx}^2+a'+q$ and $H_0=-\partial_{xx}^2$ both with Dirichlet boundary condition at zero, we get existence of wave operators $W^{\pm }(H,H_0)$.
\end{theorem}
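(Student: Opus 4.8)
The plan is to reduce, via a short-range perturbation argument and the chain rule, to the single pair $(H_2,H_0)$, and then to exploit the exact diagonalization of $H_2$ coming from the Dirac/Krein structure \eqref{ds}. First I would record that $a=2A(2\cdot)\in L^2(\mathbb R^+)$ because $A\in L^2(\mathbb R^+)$; consequently $a^2\in L^1(\mathbb R^+)$, and the difference $H-H_2=(a'+q)-(a^2+a')=q-a^2$ belongs to $L^1(\mathbb R^+)$, the two operators carrying the same Dirichlet condition at the origin. In one dimension an $L^1$ potential difference yields a trace-class difference of resolvents (the free resolvent kernel is bounded, so $|V|^{1/2}(H_0-z)^{-1}|V|^{1/2}$ is Hilbert--Schmidt and the resolvent difference factors into a product of two such operators), so by the Kato--Rosenblum theorem $W^\pm(H,H_2)$ exist and are complete. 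By the chain rule $W^\pm(H,H_0)=W^\pm(H,H_2)\,W^\pm(H_2,H_0)$ it then suffices to prove that $W^\pm(H_2,H_0)$ exist; this is where hypothesis \eqref{extra} enters.

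Next I would set up the spectral picture for $(H_2,H_0)$. By \eqref{ds}, $H_2$ is the lower diagonal block of $\mathcal D^2$, and using \eqref{sd_l77} together with \eqref{sd_l33} the functions $\psi(x,k)\df\Im\mathcal E(x,k)$, where $\mathcal E(x,k)=e^{ixk}\overline{P_*(2x,k)}$, are generalized eigenfunctions of $H_2$ with eigenvalue $k^2$ and satisfy $\psi(0,k)=0$, consistent with the Dirichlet condition. The isometry \eqref{df2} furnishes the spectral transform $\mathcal F_2$ for $H_2$; its free analog ($A=0$, $P_*\equiv1$, $\Pi\equiv1$) is the sine transform $\mathcal F_0$ diagonalizing $H_0$, with eigenfunctions $\psi_0(x,k)=\sin(xk)$. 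Writing both propagators through these transforms, the object to analyze is
\[
e^{itH_2}e^{-itH_0}=\mathcal F_2^{*}\,e^{itk^2}\,\big(\mathcal F_2\mathcal F_0^{*}\big)\,e^{-itk^2}\,\mathcal F_0,
\]
so everything reduces to the behaviour of $e^{itk^2}\,(\mathcal F_2\mathcal F_0^{*})\,e^{-itk^2}$ as $t\to\pm\infty$.

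The heart of the matter is the analysis of $T\df\mathcal F_2\mathcal F_0^{*}$, whose kernel is $\int_0^\infty\psi(x,k)\psi_0(x,k')\,dx$. Here I would substitute $\mathcal E(x,k)=e^{ixk}\overline{P_*(2x,k)}$ and use the limit $P_*(2x,k)\to\Pi(k)$ from \eqref{li}, \eqref{sd_uu} and Lemma \ref{l1}. Replacing $P_*(2x,k)$ by its limit $\Pi(k)$ produces a diagonal operator, namely multiplication on the spectral side by a scattering factor built from $\Pi(k)$, which commutes with $e^{itk^2}$ and is exactly the candidate wave operator; the error $\mathcal E(x,k)-e^{ixk}\overline{\Pi(k)}=e^{ixk}\overline{P_*(2x,k)-\Pi(k)}$ produces an off-diagonal remainder that must be shown to vanish in the strong limit $t\to\pm\infty$. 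The decisive estimate is the tail bound behind \eqref{extra}: letting $r_2\to\infty$ in \eqref{sd1} controls $\sup_{r>\rho}|\Pi(k)-P_*(r,k)|$ in $L^2(d\sigma/(1+k^2))$, and \eqref{extra} forces this to vanish as $\rho\to\infty$. Restricting to the dense set of $f$ whose spectral transform is supported in a fixed compact set $\{|k|\le\Lambda\}$ neutralizes the weight $1/(1+k^2)$, turning the weighted control into genuine $L^2$ control and letting the oscillation $e^{it(k^2-k'^2)}$ annihilate the remainder in the limit (Riemann--Lebesgue together with the uniform tail smallness).

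I expect the main obstacle to be exactly this last transfer. The non-decaying term $a'$ makes a direct Cook's-method estimate for $(H_2,H_0)$ impossible, and the whole purpose of passing through the Dirac square \eqref{ds} is to replace that term by an exactly solvable spectral problem in which the only remaining input is the convergence of the Krein solutions $P_*(\cdot,k)\to\Pi(k)$. Converting the weighted maximal-function convergence supplied by Lemma \ref{l1} into operator convergence of the propagators, while correctly accounting for the singular part $\sigma_s$ of the measure (through \eqref{sd_l1} and \eqref{sd_uu}) and for the Hardy-type range of the transform \eqref{df2}, is the delicate step.
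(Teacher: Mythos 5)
Your outline follows the paper's strategy: reduce to the pair $(H_2,H_0)$ by a trace-class argument, pass through the Dirac square \eqref{ds} and the Krein spectral decomposition, replace $P_*(2\cdot,k)$ by its limit $\Pi(k)$, and let \eqref{extra} absorb the remainder. The reduction and the identification of the candidate limit are sound. But the analytic core is not carried out, and two of your claims are not correct as stated. First, even after the substitution $P_*\equiv\Pi$, the operator $\mathcal F_2\mathcal F_0^{*}$ is \emph{not} diagonal: its kernel $\int_0^\infty\psi(x,k)\sin(xk')\,dx$ contains, besides a delta on the diagonal, a principal-value (Hilbert-transform type) off-diagonal part, and asserting that conjugation by $e^{\pm itk^2}$ kills it ``by Riemann--Lebesgue'' is not a proof --- the remainder is not an integrable kernel. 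This is exactly where the real work lies: one needs the stationary-phase asymptotics of the free evolution (Lemma \ref{a6}), an integration by parts via the second Krein equation, the Fresnel integral $\int_0^{(x-2kt)/(2\sqrt t)}e^{iu^2}du$, the partition of unity $\mu_-+\mu_0+\mu_+$, and the $L^1$-Fourier representations of Lemmas \ref{a1} and \ref{a2}, together with the Stummel bound \eqref{sd_l21}. Second, you have mislocated the role of \eqref{extra}: the convergence $P_*(r,\cdot)\to\Pi\cdot\chi_{E_s^c}$ in $L^2(\sigma)$, which handles the main boundary term, already follows from $A\in L^2(\mathbb R^+)$ via \eqref{sd_uu}. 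The maximal-function hypothesis is needed for a different purpose, namely to bound the term \eqref{key} that arises when one integrates by parts against the non-decaying cutoff $\mu_+((x-2kt)/(2\sqrt t))$, where $\int_{r_1}^{x}A(2u)P(2u,k)\,du$ must be controlled uniformly over the moving window $r_1,x\in[2at,2bt]$.

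A further gap: restricting $f$ so that $\widehat f_o$ is compactly supported localizes the \emph{free} spectral variable, not the variable $k$ on the $\sigma$-side; the weight $1/(1+k^2)$ in \eqref{extra} and \eqref{sd_l21} therefore only yields convergence of the error terms in $L^2_\sigma(\Delta)$ for each bounded interval $\Delta$, not in $L^2(2\sigma)$ globally. To close the argument one must upgrade local convergence to global convergence by an exhaustion argument using conservation of norm ($\|e^{itH}e^{-itH_0}f\|_2=\|f\|_2$ and $\|p\|_{2,2\sigma}=\|f\|_2$ via \eqref{sd_l1}); this is the content of Lemma \ref{prince}, which your proposal does not supply a substitute for.
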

This Theorem is the central technical result of our paper. Before giving its proof, we state the following Lemma.
\begin{lemma}\label{prince}
Suppose $t\ge 0$, $\mu$ is a measure on $\mathbb{R}$, and $p(k),p_t(k)\in L_\mu^2(\mathbb{R})$. Let $\|p\|_{2,\mu}=1$ and 
\begin{equation}\label{sd_l45}
\lim_{t\to\infty}\|p_t\|_{2,\mu}=1, \quad
\lim_{t\to\infty}\int_{\Delta}|p-p_t|^2d\mu=0
\end{equation}
for every interval $\Delta\subset \mathbb{R}$. Then, $\lim\limits_{t\to\infty}\|p-p_t\|_{2,\mu}=0$.
\end{lemma}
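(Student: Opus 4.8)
The plan is to reduce everything to the Hilbert-space identity
\[
\|p-p_t\|_{2,\mu}^2=\|p\|_{2,\mu}^2+\|p_t\|_{2,\mu}^2-2\Re\langle p_t,p\rangle_{2,\mu},\qquad \langle p_t,p\rangle_{2,\mu}\df\int_\R p_t\,\overline{p}\,d\mu.
\]
Since $\|p\|_{2,\mu}=1$ and $\|p_t\|_{2,\mu}\to1$ by the first part of \eqref{sd_l45}, the right-hand side tends to $1+1-2\Re\langle p_t,p\rangle_{2,\mu}$, so it suffices to prove $\langle p_t,p\rangle_{2,\mu}\to 1=\|p\|_{2,\mu}^2$ as $t\to\infty$. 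In effect, the two hypotheses in \eqref{sd_l45} say that $p_t\to p$ weakly while the norms converge, and the conclusion is exactly the Radon--Riesz (uniform convexity) property of $L^2_\mu(\R)$; the whole point is to verify the weak-type statement against the single fixed test function $p$.

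To establish $\langle p_t,p\rangle_{2,\mu}\to1$, the only subtlety is that the local convergence in \eqref{sd_l45} is given on bounded intervals, so the tail must be treated separately. Note first that $\{\|p_t\|_{2,\mu}\}$ is bounded, say $\|p_t\|_{2,\mu}\le2$ for all large $t$. Fix $\varepsilon>0$ and, using $p\in L^2_\mu(\R)$, choose $R>0$ so large that $\int_{|k|>R}|p|^2d\mu<\varepsilon^2$. I then split, with $\Delta=[-R,R]$,
\[
\langle p_t,p\rangle_{2,\mu}=\int_{\Delta}p_t\,\overline p\,d\mu+\int_{|k|>R}p_t\,\overline p\,d\mu.
\]

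For the first piece, Cauchy--Schwarz on $\Delta$ and the second hypothesis in \eqref{sd_l45} give
\[
\left|\int_{\Delta}(p_t-p)\,\overline p\,d\mu\right|\le\Big(\int_\Delta|p_t-p|^2d\mu\Big)^{1/2}\|p\|_{2,\mu}\xrightarrow[t\to\infty]{}0,
\]
so $\int_\Delta p_t\,\overline p\,d\mu\to\int_\Delta|p|^2d\mu$, a quantity lying within $\varepsilon^2$ of $\|p\|_{2,\mu}^2=1$. For the tail, Cauchy--Schwarz on $\{|k|>R\}$ gives
\[
\Big|\int_{|k|>R}p_t\,\overline p\,d\mu\Big|\le\|p_t\|_{2,\mu}\Big(\int_{|k|>R}|p|^2d\mu\Big)^{1/2}\le2\varepsilon
\]
for large $t$. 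Combining the two estimates yields $\limsup_{t\to\infty}|\langle p_t,p\rangle_{2,\mu}-1|\le\varepsilon^2+2\varepsilon$, and since $\varepsilon$ is arbitrary, $\langle p_t,p\rangle_{2,\mu}\to1$. Substituting into the identity above gives $\|p-p_t\|_{2,\mu}^2\to0$, as claimed.

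The one genuine obstacle is the tail: because \eqref{sd_l45} supplies no uniform-in-$t$ bound on $\int_{|k|>R}|p_t|^2d\mu$, one cannot directly control $\int_{|k|>R}|p_t-p|^2d\mu$ and thus cannot prove norm convergence by a naive interval-plus-tail split of $\|p-p_t\|_{2,\mu}^2$ itself. Passing through the inner product circumvents this, since in $\langle p_t,p\rangle_{2,\mu}$ the tail pairs the uniformly norm-bounded family $p_t$ against the \emph{fixed} function $p$, whose $L^2_\mu$-tail is small, so a single application of Cauchy--Schwarz suffices.
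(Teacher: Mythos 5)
Your proof is correct. It takes a genuinely different (though equally elementary) route from the paper's. You pass through the polarization identity $\|p-p_t\|_{2,\mu}^2=\|p\|_{2,\mu}^2+\|p_t\|_{2,\mu}^2-2\Re\langle p_t,p\rangle_{2,\mu}$ and reduce everything to showing $\langle p_t,p\rangle_{2,\mu}\to 1$; the tail term is then harmless because Cauchy--Schwarz pairs the uniformly bounded family $p_t$ against the fixed function $p$, whose tail you have made small --- this is exactly the Radon--Riesz mechanism, and you never need to know anything about where the mass of $p_t$ lives. The paper instead works directly with $\|p-p_t\|_{2,\mu}^2$ and attacks the obstacle you identify head-on: it uses the norm convergence $\|p_t\|_{2,\mu}\to 1$ together with the local convergence on $\Delta=[-L,L]$ to show that $\int_{\Delta^c}|p_t|^2d\mu\lesssim \epsilon+\sqrt\epsilon$ (an exhaustion argument: the total mass is close to $1$ and almost all of it is accounted for on $\Delta$), and then bounds the tail of $\|p-p_t\|^2_{2,\mu}$ by $2\int_{\Delta^c}|p|^2d\mu+2\int_{\Delta^c}|p_t|^2d\mu$. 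The paper's version yields the slightly stronger intermediate fact that no $L^2_\mu$-mass of $p_t$ escapes to infinity; your version is marginally shorter and makes the uniform-convexity structure of the statement explicit. Both are complete and correct.
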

\begin{proof}
The proof is based on a standard exhaustion principle. For every $\epsilon\in (0,1)$, we can choose $L>0$ such that $\int_{\Delta^c}|p|^2d\mu \le\epsilon$ where $\Delta\df [-L,L]$. By \eqref{sd_l45}, there is $T$ so that 
\[
|1-\|p_t\|^2_{2,\mu}|<\epsilon, \quad
\int_{\Delta}|p-p_t|^2d\mu<\epsilon\] for $t>T$. Thus, for $t>T$, we also have
\begin{eqnarray*}
\int_{\Delta^c}|p_t|^2d\mu=\|p_t\|_{2,\mu}^2-\int_{\Delta}|p_t|^2d\mu=\\
\|p_t\|_{2,\mu}^2-\left(1-\int_{\Delta^c}|p|^2d\mu-\int_{\Delta}(|p|^2-|p_t|^2)d\mu\right)\le\\
|\|p_t\|_{2,\mu}^2-1|+\int_{\Delta^c}|p|^2d\mu+\left|\int_{\Delta}(|p|^2-|p_t|^2)d\mu\right|\lesssim\\
 \epsilon+\sqrt\epsilon,
\end{eqnarray*}
where we used triangle inequality to estimate
\begin{eqnarray*}
\left|\int_{\Delta}(|p|^2-|p_t|^2)d\mu\right|=\Bigl|\|p\|^2_{L^2_{\mu}(\Delta)}-\|p_t\|_{L^2_{\mu}(\Delta)}^2\Bigr|=\\
\Bigl( \|p\|_{L^2_{\mu}(\Delta)}+\|p_t\|_{L^2_{\mu}(\Delta)}\Bigr)\cdot
\Bigl|\|p\|_{L^2_{\mu}(\Delta)}-\|p_t\|_{L^2_{\mu}(\Delta)}\Bigr|\lesssim
 \|p-p_t\|_{L^2_{\mu}(\Delta)} \le \sqrt \epsilon\,.
\end{eqnarray*}
Thus, \[\int_{\mathbb{R}}|p-p_t|^2d\mu=
\int_{\Delta}|p-p_t|^2d\mu+\int_{\Delta^c}|p-p_t|^2d\mu \le \epsilon+2\int_{\Delta^c}|p|^2d\mu+2\int_{\Delta^c}|p_t|^2d\mu \lesssim \sqrt\epsilon
\]
 for $t>T$ and the proof is finished.
\end{proof}

\begin{proof}[\bf Proof of Theorem \ref{sd_t1}]
Since $a^2,q\in L^1(\mathbb{R}^+)$ and relative trace class perturbations do not change existence of wave operators (Birman-Kuroda Theorem, \cite{rs33}, p. 27), it is enough to consider $H=H_2=a'+a^2$. 
Take $f\in L^2(\mathbb{R}^+)$. We need to prove existence of
\begin{equation}\label{sd_l66}
\lim_{t\to\pm \infty} e^{itH}e^{-itH_0}f\,,
\end{equation}
where the limit is understood in $L^2(\mathbb{R}^+)$ topology.
Notice that, since both groups $e^{itH}$ and $e^{-itH_0}$ preserve $L^2(\R^+)$ norm, it is enough to prove existence of the limit for every $f\in \cal{T}$ where $\cal{T}$ is any dense subset in $L^2(\mathbb{R}^+)$. We define $\cal{T}$ as follows: $\cal{T}\df \{f: \widehat{f_o}\in C_c^\infty(\mathbb{R}), 0\notin \supp \widehat{f_o}\}$, where $f_o$ denotes the odd extension of $f$ to $\mathbb{R}$. From now on, we assume that $f\in \cal{T}, \|f\|_2=1$ and that $t\to +\infty$ in \eqref{sd_l66} (the case $t\to -\infty$ can be handled similarly). Denote $f_+\df (\widehat{f_o}\cdot \chi_{\xi>0})^{\vee}$, $f_-\df (\widehat{f_o}\cdot \chi_{\xi<0})^{\vee}$. Working on the Fourier side, we get
\[
e^{-itH_0}f=\frac{1}{\pi}
\int_{\mathbb{R}}e^{-it\xi^2}\left(\int_{\mathbb{R}^+} f(u)\sin(\xi u)du \right)\sin(\xi x)d\xi=\frac{1}{2\pi}\int_{\mathbb{R}}e^{-it\xi^2}\left(\int_{\mathbb{R}} f_o(u)e^{-i \xi u}du \right)e^{i\xi x}d\xi\,.
\]
The last expression is equal to the restriction of $e^{it\partial^2_{xx}}f_o$ to $\mathbb{R}^+$, where $\partial^2_{xx}$ is considered on all of $\mathbb{R}$. The large time asymptotics of $e^{it\partial^2_{xx}}h$ for $h\in L^2(\R)$  is known and given in Lemma \ref{a6} from Appendix. Since $\widehat f_o(\xi)=\widehat f_+(\xi)$ for $\xi>0$,  it is  enough to show that
\begin{equation}\label{sd_5}
I\df \frac{e^{itk^2}}{1+i}   \int_0^\infty \frac{e^{ix^2/(4t)}}{\sqrt t}\widehat f_+(x/(2t))   \psi(x,k)dx
\end{equation}
has a limit in $L^2(\mathbb{R},2\sigma)$ when $t\to +\infty$. Indeed, the spectral measure for Dirac operator $\cal{D}$ is equal to $2\sigma$, the generalized eigenfunctions are $(\phi,\psi)$, and the Schr\"odinger operator is related to Dirac by \eqref{ds} so we can use spectral decomposition for Dirac operator to compute $e^{itH}$ where $H=H_2$. To this end, we will use the following generalized Fourier transform  
\[
\left(\begin{matrix}f_1\\f_2
\end{matrix}\right)\rightarrow \cal{F}=\int_0^\infty f_1(x)\phi (x,k)dx+\int_0^\infty f_2(x)\psi(x,k) dx
\]
and the analog of Plancherel's Theorem
\[
 \|f_1\|_2^2+\|f_2\|_2^2=\|{\cal F}\|_{2,2\sigma}^2\,.
\]
Since $f\in \cal{T}$,  $\widehat f_+$ is supported on some interval $[a,b]$ and $a>0$. Use \eqref{sd_l33} and substitute
\[
\psi(x,k)=\frac{\overline{P_*}(2x,k)e^{ikx}-P_*(2x,k)e^{-ikx}}{2i}
\]
into \eqref{sd_5} to get
\begin{equation}\label{sort}
I=I_1-I_2\,,
\end{equation}
where
\begin{eqnarray*}
I_1=\frac{e^{itk^2}}{2i(1+i)}\int_{2at}^{2bt}\frac{e^{ix^2/(4t)}}{\sqrt t}\widehat f_+(x/(2t))\overline{P_*}(2x,k)e^{ikx}dx, \\
I_2=\frac{e^{itk^2}}{2i(1+i)}\int_{2at}^{2bt}\frac{e^{ix^2/(4t)}}{\sqrt t}\widehat f_+(x/(2t)){P_*}(2x,k)e^{-ikx}dx\,.
\end{eqnarray*}
Consider $I_2$, the analysis of $I_1$ is similar. Integrating by parts, we get
\[
\int_{2at}^{2bt} {P_*}(2x,k) \left(\int_{2at}^x \frac{e^{iu^2/(4t)}}{\sqrt t}\widehat f_+(u/(2t))e^{-iku}du\right)'dx=
\]
\[={P_*}(4bt,k)\int_{2at}^{2bt}\frac{e^{iu^2/(4t)}}{\sqrt t}\widehat f_+(u/(2t))e^{-iku}du-J_2\,,
\]
where, thanks to the second equation in \eqref{sa},
\[
J_2=\int_{2at}^{2bt} 2A(2x)P(2x,k) \left(\int_{2at}^x \frac{e^{iu^2/(4t)}}{\sqrt t}\widehat f_+(u/(2t))e^{-iku}du\right)dx\,.
\]
For the first term, we can write
\[
{P_*}(4bt,k)\int_{2at}^{2bt}\frac{e^{iu^2/(4t)}}{\sqrt t}\widehat f_+(u/(2t))e^{-iku}du=
\]
\[
({P_*}(4bt,k)-\Pi(k)\cdot \chi_{E_s^c})\int_{2at}^{2bt}\frac{e^{iu^2/(4t)}}{\sqrt t}\widehat f_+(u/(2t))e^{-iku}du
\]
\[
+\Pi(k)\cdot \chi_{E_s^c}\int_{2at}^{2bt}\frac{e^{iu^2/(4t)}}{\sqrt t}\widehat f_+(u/(2t))e^{-iku}du\,.
\]
From \eqref{ff1}, we get
\[
\sup_{t>1}\left\|\int_{2at}^{2bt}\frac{e^{iu^2/(4t)}}{\sqrt t}\widehat f_+(u/(2t))e^{-iku}du\right\|_{L^\infty(\R)}<C_{(f)}
\]
and \eqref{sd_uu} implies 
\[
\lim_{t\to+\infty}\left\|({P_*}(4bt,k)-\Pi(k)\cdot \chi_{E_s^c})\int_{2at}^{2bt}\frac{e^{iu^2/(4t)}}{\sqrt t}\widehat f_+(u/(2t))e^{-iku}du\right\|_{2,\sigma}= 0\,.
\]
From \eqref{sd_l1} and \eqref{sd_7}, we obtain
\begin{equation}\label{den1}
\lim_{t\to\infty}\left\|\frac{e^{itk^2}\Pi(k)}{2i(1+i)}\cdot \chi_{E_s^c}\cdot \int_{2at}^{2bt}\frac{e^{iu^2/(4t)}}{\sqrt t}\widehat f_+(u/(2t))e^{-iku}du- \frac{\sqrt{2\pi}\Pi(k)}{2i}\cdot \chi_{E_s^c} \widehat f_+(k)\right\|_{2,\sigma}=0\,.
\end{equation}
The analysis for $I_1$ is analogous, it also gives the main term converging to 
\[
\frac{\sqrt{2\pi} \cdot \overline{\Pi(k)}}{2i}\cdot  \chi_{E_s^c} \widehat f_+(-k)
\] and a correction which we call $J_1$.
Consider $J_1$ and $J_2$. We claim that if we show that 
\begin{equation}\label{sd_l55}
\lim_{t\to\infty}\int_{\Delta}|J_1|^2d\sigma=0, \quad \lim_{t\to\infty}\int_{\Delta}|J_2|^2d\sigma=0
\end{equation}
for every interval $\Delta\subset \mathbb{R}$, then  the proof of  Theorem \ref{sd_t1} will be finished after application of  Lemma~\ref{prince}. Indeed, in this lemma, we set $\mu=2\sigma$, $p_t=I$ and the limiting function $p$ is
\[
p=\chi_{E_s^c}\cdot \sqrt{2\pi}\frac{\overline{\Pi(k)}\widehat f_+(-k)-\Pi(k)\widehat f_+(k)}{2i}\,.
\]
To apply Lemma \ref{prince}, we notice that $\|I\|_{2,2\sigma}\to 1$ by Lemma \ref{a6}. Moreover, \eqref{sd_l1} gives $\|p\|_{2,2\sigma}=\|f\|_2=1$.

We will prove the second identity in \eqref{sd_l55}, the first one can be obtained similarly. For $J_2$, we have 
\[
J_2=-2\int_{2at}^{2bt} A(2x)P(2x,k)\left(\int_{2at}^x \frac{e^{i(u^2/(4t)-ku)}}{\sqrt t}\widehat f_+(u/(2t))du\right) dx\,.
\]
One can write
\[
\int_{2at}^x \frac{e^{i(u^2/(4t)-ku)}}{\sqrt t}\widehat f_+(u/(2t))du=\int_{2at}^0 \frac{e^{i(u^2/(4t)-ku)}}{\sqrt t}\widehat f_+(u/(2t))du+\int_{0}^x \frac{e^{i(u^2/(4t)-ku)}}{\sqrt t}\widehat f_+(u/(2t))du\,.
\]
The first term does not depend on $x$ and we can use \eqref{ff1} and \eqref{ort1} to write
\begin{equation}\label{lkl}
\left\|\int_{2at}^{2bt} A(2x)P(2x,k)\left(\int_{2at}^0 \frac{e^{i(u^2/(4t)-ku)}}{\sqrt t}\widehat f_+(u/(2t))du \right)dx\right\|_{2,\sigma}\le C_{(f)}\int_{at}^{bt}|A(x)|^2dx\,,
\end{equation}
where the last expression converges to zero as $t\to\infty$. For the other term, we have
\[
\int_{0}^x \frac{e^{i(u^2/(4t)-ku)}}{\sqrt t}\widehat f_+(u/(2t))du=
e^{-itk^2}\int_{0}^x \frac{e^{i(u/(2\sqrt t)-k\sqrt t)^2}}{\sqrt t}\widehat f_+(u/(2t))du\,.
\]
The integral can be rewritten as
\[
\int_{0}^x \frac{e^{i(u/(2\sqrt t)-k\sqrt t)^2}}{\sqrt t}\widehat f_+(u/(2t))du=
\]
\[\int_{-\infty}^x \frac{e^{i(u/(2\sqrt t)-k\sqrt t)^2}}{\sqrt t}\widehat f_+(u/(2t))du-\int_{-\infty}^0 \frac{e^{i(u/(2\sqrt t)-k\sqrt t)^2}}{\sqrt t}\widehat f_+(u/(2t))du\,.
\]
The second term is $x$--independent so its contribution is negligible by the argument identical to \eqref{lkl}. For the first one, we change variables and write, using the same variable $u$,
\begin{eqnarray}\label{d2}
\int_{-\infty}^x \frac{e^{i(u/(2\sqrt t)-k\sqrt t)^2}}{\sqrt t}\widehat f_+(u/(2t))du=2\int_{-\infty}^{(x-2kt)/{2\sqrt t}}  e^{iu^2}\widehat f_+(k+u/\sqrt t)du\\
=2\int_{-\infty}^{(x-2kt)/{2\sqrt t}}  e^{iu^2}\Bigl(\widehat f_+(k+u/\sqrt t)-\widehat f_+(k)\Bigr)du+2\widehat f_+(k)\int_{-\infty}^{(x-2kt)/{2\sqrt t}}  e^{iu^2}du\nonumber\,.
\end{eqnarray}
We can continue as follows
\begin{eqnarray*}
\int_{-\infty}^{(x-2kt)/{2\sqrt t}}  e^{iu^2}\Bigl(\widehat f_+(k+u/\sqrt t)-\widehat f_+(k)\Bigr)du=\int_{-\infty}^{0}  e^{iu^2}\Bigl(\widehat f_+(k+u/\sqrt t)-\widehat f_+(k)\Bigr)du+\\
\int_{0}^{(x-2kt)/{2\sqrt t}}  e^{iu^2}\Bigl(\widehat f_+(k+u/\sqrt t)-\widehat f_+(k)\Bigr)du\,.
\end{eqnarray*}
The first term in the right-hand side does not depend on $x$ and it is uniformly bounded in $k\in \mathbb{R}$ and $t\ge 1$ as can be seen by integrating by parts. Thus, its contribution to $\|J_2\|_{L^2_\sigma(\Delta)}$ is also negligible. 

We want to apply Lemma~\ref{a1} from Appendix to the second term. Since we are  interested in $k\in \Delta$ and $x\in [at,bt]$, then $|(x-2kt)/2t|<C_{(a,b,\Delta)}$. Hence, the Lemma is applicable with $\epsilon=1/\sqrt t, g(u)=\widehat f_+(k+u)-\widehat f_+(k)$ which gives
\[
\left|\int_{0}^{(x-2kt)/{2\sqrt t}}  e^{iu^2}\Bigl(\widehat f_+(k+u/\sqrt t)-\widehat f_+(k)\Bigr)du\right|\le C_{a,b,\Delta,f}/\sqrt t\,.
\]
The proof of Lemma \ref{a1} shows that this bound is uniform in $k\in \Delta$. We substitute it and apply \eqref{sd_l21} along with generalized Minkowski inequality to get
\begin{eqnarray*}
\left(\int_{\Delta} \left|    \frac{1}{\sqrt t}\int_{2at}^{2bt} |A(2x)P(2x,k)|\right|^2 d\sigma \right)^{1/2}\lesssim 
\\
\frac{1}{\sqrt t}\int_{2at}^{2bt} |A(2x)|\cdot \left(\int_\Delta |P(2x,k)|^2d\sigma\right)^{1/2}dx\stackrel{\eqref{sd_l21}}{\lesssim }
\\
\frac{C_{(\Delta,\|A\|_{\rm St})}}{\sqrt t}\int_{2at}^{2bt} |A(2x)|dx\le C_{(\Delta,a,b,\|A\|_{\rm St})}\left(   \int_{at}^{bt}|A(x)|^2dx\right)^{1/2}\,
\end{eqnarray*}
and the last expression converges to zero
when $t\to +\infty$.
We are only left with controlling the contribution from the last term in \eqref{d2}, i.e.,
\[
\widehat f_+(k)\int_{2at}^{2bt} A(2x)P(2x,k) \left(\int_{0}^{(x-2k t)/(2\sqrt t)}e^{iu^2}du\right) dx\,.
\]
Let us write partition of unity \begin{equation}
1=\mu_{-}+\mu_0+\mu_{+},\label{l3}
\end{equation} where $\mu_0$ is even, smooth, supported in $(-2,2)$ and 
\[
0\le \mu_0\le 1, \quad \mu_0=1\,\, {\rm if}\,\, |x|<1\,.
\] Function $\mu_+$ is supported on $(1,\infty)$ and is non-decreasing, $\mu_-(x)\df \mu_+(-x)$. Then, 
\[
\int_{0}^{(x-2k t)/(2\sqrt t)}e^{iu^2}du=\left(\int_{0}^{(x-2k t)/(2\sqrt t)}e^{iu^2}du\right)\Bigl(\mu_-((x-2k t)/(2\sqrt t))+\mu_0(\cdot)+\mu_+(\cdot)\Bigr)\,.
\]
We will apply the following trick several times. Notice that the function $F(x)\df (\int_0^{x} e^{iu^2}du)\mu_0(x)\in C_c^\infty(\mathbb{R})$  thus $\widehat F\in L^1(\mathbb{R})$ and we can write
\[
F((x-2k t)/(2\sqrt t))=\frac{1}{\sqrt{2\pi}}\int_{\mathbb{R}}\widehat F(\xi)\exp(i\xi (x-2k t)/(2\sqrt t))d\xi\,.
\]
Then,
\begin{eqnarray*}
\widehat f_+(k)\int_{2at}^{2bt} A(2x)P(2x,k) \left(\mu_0((x-2kt)/(2\sqrt t))\int_{0}^{(x-2k t)/(2\sqrt t)}e^{iu^2}du\right) dx=
\\
\widehat f_+(k)\int_{2at}^{2bt} A(2x)P(2x,k) F((x-2kt)/(2\sqrt t)) dx
=\\
\frac{1}{\sqrt{2\pi}}\int_{\mathbb{R}}\widehat F(\xi)\left(\widehat f_+(k)e^{-i\xi k\sqrt t}    \int_{2at}^{2bt} A(2x)P(2x,k)\exp(i\xi x/(2\sqrt t))dx\right)d\xi\,.
\end{eqnarray*}
We use generalized Minkowski inequality  and \eqref{ort1} to estimate the last quantity as follows
\begin{eqnarray*}
\left\|\frac{1}{\sqrt{2\pi}}\int_{\mathbb{R}}\widehat F(\xi)\left(\widehat f_+(k)e^{-i\xi k\sqrt t}    \int_{2at}^{2bt} A(2x)P(2x,k)\exp(i\xi x/(2\sqrt t))dx\right)d\xi\right\|_{2,\sigma}
\lesssim \\
\left(\int_{\mathbb{R}} |\widehat F(\xi)|d\xi\right) \|\widehat f_+\|_{\infty}\left(\int_{at}^{bt}|A(x)|^2dx\right)^{1/2}
\end{eqnarray*}
and the last quantity converges to zero when $t\to\infty$.
We apply similar strategy to other terms. 
\begin{eqnarray*}
\left(\int_{0}^{(x-2k t)/(2\sqrt t)}e^{iu^2}du\right)\mu_{+}((x-2k t)/(2\sqrt t))=C\mu_+((x-2k t)/(2\sqrt t))\\-\left(\int_{(x-2k t)/(2\sqrt t)}^\infty e^{iu^2}du\right)\mu_{+}((x-2k t)/(2\sqrt t))\,,
\end{eqnarray*}
where $C\df \int_{0}^{\infty}e^{iu^2}du$.
Consider
\[
\int_{2at}^{2bt} A(2x)P(2x,k) \mu_+((x-2k t)/(2\sqrt t))dx=\int_{2at}^{2bt} \left(\int_{2at}^x A(2u)P(2u,k)du\right)' \mu_+((x-2k t)/(2\sqrt t))dx
\]
\[
=\left(\int_{2at}^{2bt}  A(2u)P(2u,k)du \right)   \mu_+((b-k)\sqrt t)-\int_{2at}^{2bt} \left(\int_{2at}^x A(2u)P(2u,k)du\right)\frac{\mu'_+((x-2k t)/(2\sqrt t))}{2\sqrt t}dx\,.
\]
The first term gives contribution
\[
\int_{\mathbb{R}}\left|  \widehat f_+(k) \left(\int_{2at}^{2bt}  A(2u)P(2u,k)du \right)   \mu_+((b-k)\sqrt t)       \right|^2d\sigma \lesssim  \|\widehat f_+\|_\infty^2 \int_{2at}^{2bt}|A(2u)|^2du\,
\]
and the last quantity converges to zero when $t\to\infty$.
For the second one, we can write an estimate
\begin{eqnarray}\label{key}
\left|\int_{2at}^{2bt} \left(\int_{2at}^x A(2u)P(2u,k)du\right)\frac{\mu'_+((x-2k t)/(2\sqrt t))}{2\sqrt t}dx\right|\le\\ \left(\sup_{2at<r_1<r_2}\left|\int_{r_1}^{r_2}A(2u)P(2u,k)du\right|\right)\cdot \int_{2at}^{2bt}\left|\frac{\mu'_+((x-2k t)/(2\sqrt t))}{2\sqrt t}\right|dx\,.\nonumber
\end{eqnarray}
Since  $\mu_+$ was chosen to be non-decreasing, one obtains
\[
\int_{2at}^{2bt}\left|\frac{\mu'_+((x-2k t)/(2\sqrt t))}{2\sqrt t}\right|dx\lesssim  1\,.
\]
Under the assumptions of the theorem, we get
\[
\left\||\widehat f_+|\cdot \sup_{2at<r_1<r_2}\left|\int_{r_1}^{r_2}A(2u)P(2u,k)\right|\right\|_{L^2_\sigma(\Delta)}\to 0
\]
when $t\to\infty$. Consider the expression
\[
\left(\int_{(x-2k t)/(2\sqrt t)}^\infty e^{iu^2}du\right)\mu_{+}((x-2k t)/(2\sqrt t))
\]
and apply Lemma \ref{a2} from Appendix to write it as
\[
\left(\int_{(x-2k t)/(2\sqrt t)}^\infty e^{iu^2}du\right)\mu_{+}((x-2k t)/(2\sqrt t))=
\]
\[(2\pi)^{-1/2}e^{ix^2/(4t)}e^{-ixk}e^{ik^2t}\int_{\mathbb{R}} e^{i\xi(x-2kt)/(2\sqrt t)}\Psi(\xi)d\xi\,,
\]
where $\Psi\in L^1(\mathbb{R})$. Then,
\begin{eqnarray*}
\int_{2at}^{2bt} A(2x)P(2x,k)e^{ix^2/(4t)}e^{-ixk}e^{ik^2t}\left(\int_{\mathbb{R}} e^{i\xi(x-2kt)/(2\sqrt t)}\Psi(\xi)d\xi\right) dx\\
=e^{ik^2t}\int_\mathbb{R} \Psi(\xi) e^{-i\xi k\sqrt t}\left(\int_{2at}^{2bt}A(2x)e^{ix^2/(4t)}e^{i\xi x/(2\sqrt t)}\cal{E}(x,k)dx\right)d\xi\,,
\end{eqnarray*}
where $\cal{E}(x,k)=P(2x,k)e^{-ikx}$ was introduced in \eqref{sd_l77}. 
Using generalized Minkowski inequality and \eqref{df2}, we get
\begin{eqnarray*}
\left\|\widehat f_+(k)\cdot e^{ik^2t}\int_{\mathbb{R}} \Psi(\xi) e^{-i\xi k\sqrt t}\left(\int_{2at}^{2bt}A(2x)e^{ix^2/(4t)}e^{i\xi x/(2\sqrt t)}\cal{E}(x,k)dx\right)d\xi\right\|_{2,\sigma}\lesssim \\ \|\widehat f_+\|_\infty\cdot \left(\int_{\mathbb{R}}|\Psi(\xi)|d\xi\right)\cdot  \left(\int_{2at}^{2bt}|A(2x)|^2dx\right)^{1/2}
\end{eqnarray*}
and the last quantity converges to zero when $t\to\infty$.

The contribution from the term
\[
\left(\int_{0}^{(x-2k t)/(2\sqrt t)}e^{iu^2}du\right)\mu_{-}((x-2k t)/(2\sqrt t))
\]
can be handled in the same way. Thus, 
\[
\lim_{t\to\infty}\int_{\Delta}|J_2|^2d\sigma= 0
\]
and our Theorem is proved.
\end{proof}
\noindent {\bf Remark.} Notice that we had to use an additional assumption about the maximal function \eqref{extra} only when handling~\eqref{key}. It is an intriguing question whether this extra hypothesis can be dropped.

\section{Appendix}

In this Appendix, we collect results that are used in the main text. Although some of them are standard, we provide their proofs for completeness. \medskip

\noindent {\bf Proof of Lemma \ref{sd_al}.}
In Section 13 of \cite{d1}, the following formula for the Green's function of operator $\cal{D}$ (i.e., the integral kernel of $R_z=(\cal{D}-z)^{-1}$) was obtained
\[
G(x,y,z)=\left(\begin{array}{cc}G_{11}(x,y,z)&G_{12}(x,y,z)\\G_{21}(x,y,z)&G_{22}(x,y,z)\end{array}\right)=
\]
\begin{equation}
\left( \begin{array}{cc}\displaystyle 
\int_{\R} \frac{\phi(x,k)\phi(y,k)}{k-z}d\sigma_d(k) &\displaystyle  \int_{\R} \frac{\phi(x,k)\psi(y,k)}{k-z}d\sigma_d(k) \\\displaystyle 
\int_{\R} \frac{\psi(x,k)\phi(y,k)}{k-z}d\sigma_d(k) & \displaystyle \int_{\R} \frac{\psi(x,k)\psi(y,k)}{k-z}d\sigma_d(k) 
\end{array}\right)\label{sd_ff}
\end{equation}
and $\sigma_d=2\sigma$. We now introduce an auxiliary parameter $\rho\in [1,\infty)$ to be chosen later as $\rho\sim 1+\|A\|_{\rm St}^2$. Since $|P(2x,k)|^2=\phi^2(x,k)+\psi^2(x,k)$ and $\sup_{k\in \R} (k^2+\rho^2)/(k^2+1)\lesssim \rho^2$, then
\begin{equation}\label{agk9}
\sup_{x\ge 0}\int_\R \frac{|P(x,k)|^2}{k^2+1}d\sigma=\sup_{x\ge 0}\int_\R \frac{(k^2+\rho^2)|P(x,k)|^2}{(k^2+\rho^2)(k^2+1)}d\sigma\lesssim \rho \sup_{x\ge 0}\int_\R \frac{\rho|P(x,k)|^2}{k^2+\rho^2}d\sigma\,.
\end{equation}
Hence, we only need to prove that
\begin{equation}\label{agk0}
\sup_{x\ge 0} \Im (G_{11}(x,x,i\rho)+G_{22}(x,x,i\rho))\lesssim 1\,.
\end{equation}
To control $G(x,y,i\rho)$, i.e., the integral kernel of the resolvent $R_{i\rho}$, we will use the  standard perturbation series. If $R_{i\rho}^0$   denotes the resolvent of free Dirac operator,  we write the second resolvent identity:
\[
R_{i\rho}=R^0_{i\rho}-R_{i\rho}VR^0_{i\rho}, \quad V\df \left(
\begin{array}{cc} -b & -a\\-a& b
\end{array}\right)
\]
and iterate it to get the series
\begin{equation}\label{sd_sd}
R_{i\rho}=R^0_{i\rho}-R^0_{i\rho}VR^0_{i\rho}+R^0_{i\rho}VR^0_{i\rho}VR^0_{i\rho}+\ldots\,.
\end{equation}
In the series \eqref{sd_sd}, each term starting with the second one takes the form $(-1)^{j+1} (R^0_{i\rho}V)^j(R^0_{i\rho}VR_{i\rho}^0)$ and $j=0,1,2,\ldots$. If we denote its kernel by $k_j(x,y)$, then
\begin{equation}\label{agk6}
G(x,y,i\rho)=G^0(x,y,i\rho)-k_0(x,y)+k_1(x,y)+\ldots
\end{equation}
and $G^0(x,y,z)$ stands for the Green's function of free Dirac operator. Next, we will show  convergence of this series for suitable choice of parameter $\rho$ and will provide an estimate for it.

First, we claim that for every $j=0,1,\ldots$, we have
\begin{equation}\label{agk1}
\|k_j(x,y)\|\le C^{j+1} \frac{e^{-\rho |x-y|/2}\|A\|_{\rm St}^{j+1}}{\rho^{(j+1)/2}}\,,
\end{equation}
where $C$ is an absolute constant to be specified below.
We will prove \eqref{agk1} by induction. To this end, we use formula \eqref{sd_ff} and residue calculus to obtain the bound \[\|G^0(x,y,i\rho)\|\lesssim e^{-\rho|x-y|}+e^{-\rho(x+y)}\lesssim e^{-\rho|x-y|}\,.\]  
Thus, for $k_0(x,y)$, we have
\[
\|k_0(x,y)\|\lesssim \int_0^\infty e^{-\rho|x-\xi|}|\alpha(\xi)|e^{-\rho|y-\xi|}d\xi, \quad \alpha\df |a|+|b|\,.
\]
Continue $\alpha(\xi)$ to negative $\xi$ by zero.
We write
\begin{eqnarray}\label{agk2}
\|k_0(x,0)\|\lesssim \int_0^\infty e^{-\rho|x-\xi|}\alpha(\xi)e^{-\rho \xi}d\xi\le
e^{-\rho x}\int_0^x \alpha(\xi)d\xi+e^{\rho x}
\int_x^\infty \alpha(\xi)e^{-2\rho \xi }d\xi\,.
\end{eqnarray}
Then, using Cauchy-Schwarz inequality, one has
$
\int_0^x \alpha(\xi)d\xi\lesssim (x+x^{1/2})\|A\|_{\rm St}
$. By the change of variable,
\[
e^{\rho x}
\int_x^\infty \alpha(\xi)e^{-2\rho \xi }d\xi=e^{-\rho x}\int_0^\infty e^{-2\rho \eta}\alpha(x+\eta)d\eta\,.
\]
We have
\begin{eqnarray*}
\int_0^\infty e^{-2\rho \eta}\alpha(x+\eta)d\eta=\int_0^1 e^{-2\rho \eta}\alpha(x+\eta)d\eta+\sum_{j=1}^\infty \int_j^{j+1} e^{-2\rho \eta}\alpha(x+\eta)d\eta\le\\
\left(\int_0^1 e^{-4\rho\eta}d\eta\right)^{1/2}\left(\int_0^1 \alpha^2(x+\eta)d\eta\right)^{1/2}+\sum_{j=1}^\infty e^{-2\rho j}\left(\int_j^{j+1} \alpha^2(x+\eta)d\eta\right)^{1/2}
\lesssim \frac{\|A\|_{\rm St}}{\rho^{1/2}}
\end{eqnarray*}
by virtue of Cauchy-Schwarz inequality.
Summing up, we get
\[
\|k_0(x,0)\|\lesssim (x+x^{1/2}+\rho^{-1/2})e^{-\rho x}\|A\|_{\rm St}\lesssim \frac{e^{-\rho x/2}\|A\|_{\rm St}}{\rho^{1/2}}\,.
\]
The Stummel condition is translation-invariant on the line which implies \eqref{agk1} for $j=0$:
\begin{equation}\label{sd_ll}
\|k_0(x,y)\|\le C \frac{e^{-\rho|x-y|/2}}{\rho^{1/2}}\|A\|_{\rm St}\,.
\end{equation}
We can write
$
k_{j+1}(x,y)=\int_{\R^+}G^0(x,\xi,i\rho)V(\xi)k_j(\xi,y)d\xi
$ and use inductive assumption to conclude that
\[
\|k_{j+1}(x,y)\|\le C_1\int_{\R^+}e^{-\rho|x-\xi|}\alpha(\xi)\cdot \|k_j(\xi,y)\|d\xi\le \frac{C_1C^{j+1}\|A\|^{j+1}_{\rm St}}{\rho^{(j+1)/2}}\int_{\R^+}e^{-\rho|x-\xi|}\alpha(\xi)e^{-\rho|\xi-y|/2}d\xi\,.
\]
For $y=0$, we get
\begin{equation}\label{agk4}
\int_{\R^+}e^{-\rho|x-\xi|}\alpha(\xi)e^{-\rho \xi/2}d\xi= e^{-\rho x/2}\cdot e^{-\rho x/2}\int_0^x e^{\rho\xi/2}\alpha(\xi)d\xi+e^{\rho x}\int_x^\infty \alpha(\xi)e^{-3\rho \xi/2}d\xi\,.
\end{equation}
Then, we write
\[
e^{-\rho x/2}\int_0^x e^{\rho\xi/2}\alpha(\xi)d\xi=\int_0^x e^{-\rho \eta/2}\alpha(x-\eta)d\eta \le \int_0^1 e^{-\rho \eta/2}\alpha(x-\eta)d\eta+\sum_{j=1}^{\infty}\int_j^{j+1} e^{-\rho \eta/2}\alpha(x-\eta)d\eta\le
\]
\[
\left(\int_0^1 e^{-\rho \eta}d\eta\right)^{1/2}\left(\int_0^1\alpha^2(x-\eta)d\eta\right)^{1/2}+\sum_{j=1}^{\infty}e^{-\rho j/2}\left(\int_j^{j+1} \alpha^2(x-\eta)d\eta\right)^{1/2}\lesssim \frac{\|A\|_{\rm St}}{\rho^{1/2}}\,.
\]
Estimating the second integral in \eqref{agk4} in a similar way, we have
\[
\int_{\R^+}e^{-\rho|x-\xi|}\alpha(\xi)e^{-\rho \xi/2}d\xi\le C_2 \frac{e^{-\rho x/2}\|A\|_{\rm St}}{\rho^{1/2}}
\]
and, using translation invariance of Stummel condition,
\[
\int_{\R^+}e^{-\rho|x-\xi|}\alpha(\xi)e^{-\rho|\xi-y|/2}d\xi\le C_2 \frac{e^{-\rho |x-y|/2}\|A\|_{\rm St}}{\rho^{1/2}}\,.
\]
Thus, 
\[
\|k_{j+1}(x,y)\|\le \frac{C_1C_2C^{j+1}e^{-\rho|x-y|/2}\|A\|^{j+2}_{\rm St}}{\rho^{(j+2)/2}}\,.
\]
Choosing $C$ sufficiently large, e.g., larger than $C_1C_2$, we show \eqref{agk1} for $j+1$. This proves the claim. 
Now, \eqref{agk6} implies $\|G(x,y,i\rho)\|\lesssim e^{-\rho|x-y|/2}$ provided $\rho=2C(1+ \|A\|^2_{\rm St})$. Thus, \eqref{agk9} finishes the proof.
\qed

\begin{lemma}\label{a6}
Let $h\in L^2(\mathbb{R})$. Then,
\begin{equation}\label{sd_6}
\lim_{t\to+\infty}\left\|e^{it\partial^2_{xx}}h- \frac{1}{1+i}\frac{e^{ix^2/(4t)}}{\sqrt t}\widehat h(x/(2t))\right\|_{L^2(\R)}=0, 
\end{equation}
and, taking inverse Fourier transform,
\begin{equation}\label{sd_7}
\lim_{t\to +\infty}\left\|  \frac{1}{1+i}\left(\frac{e^{ix^2/(4t)}}{\sqrt t}\widehat h(x/(2t))\right)^{\vee}-e^{-it\xi^2}\ \widecheck{h}(\xi)\right\|_{L^2(\R)}= 0\,.
\end{equation}
Suppose $\widehat h\in C_c^\infty(\mathbb{R})$,  then
\begin{equation}\label{ff1}
\sup_{t>1,\alpha,\beta\in \R}\left\|\int_{\alpha t}^{\beta t}\frac{e^{ix^2/(4t)}}{\sqrt t}\widehat h(x/(2t))e^{ixk}dx\right\|_{L^\infty(\mathbb{R})}<C_{(h)}\,.
\end{equation}
\end{lemma}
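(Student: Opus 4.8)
The plan is to prove the three assertions \eqref{sd_6}, \eqref{sd_7}, \eqref{ff1} in turn, deriving \eqref{sd_7} for free from \eqref{sd_6} and treating \eqref{ff1} by a stationary phase estimate. The whole statement is the standard dispersive asymptotics for the free Schr\"odinger group together with a uniform oscillatory bound, so the work is in organizing it cleanly.

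For \eqref{sd_6} I would avoid any density argument and instead use the classical factorization of the free propagator. Writing the kernel as $e^{it\partial^2_{xx}}h(x)=(4\pi i t)^{-1/2}\int_\R e^{i(x-z)^2/(4t)}h(z)\,dz$ and expanding $(x-z)^2=x^2-2xz+z^2$, one recognizes the inner integral as $\sqrt{2\pi}\,\widehat{g_t}(x/(2t))$, where $g_t(z)\df e^{i z^2/(4t)}h(z)$. Matching the prefactor $\sqrt{2\pi}\,(4\pi i t)^{-1/2}=\frac{1}{1+i}\frac{1}{\sqrt t}$ yields the exact identity $e^{it\partial^2_{xx}}h(x)=\frac{1}{1+i}\frac{e^{ix^2/(4t)}}{\sqrt t}\,\widehat{g_t}(x/(2t))$. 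Subtracting the claimed profile, the modulus of the difference is $\frac{1}{\sqrt 2}\,t^{-1/2}\bigl|\widehat{g_t}(x/(2t))-\widehat h(x/(2t))\bigr|$; taking the $L^2(\R)$ norm in $x$ and substituting $y=x/(2t)$ makes the factor $t^{-1}$ and the Jacobian $2t$ cancel, so the left-hand side of \eqref{sd_6} equals $\|\widehat{g_t}-\widehat h\|_{L^2(\R)}=\|g_t-h\|_{L^2(\R)}$ by Plancherel. Since $e^{i z^2/(4t)}\to 1$ pointwise as $t\to\infty$ and $|g_t-h|\le 2|h|\in L^2(\R)$, dominated convergence closes \eqref{sd_6}.

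The identity \eqref{sd_7} is then immediate: the inverse Fourier transform is unitary on $L^2(\R)$, and $(e^{it\partial^2_{xx}}h)^\vee(\xi)=e^{-it\xi^2}\widecheck h(\xi)$ because $e^{it\partial^2_{xx}}$ acts on the Fourier side as the multiplier $e^{-it\xi^2}$. Applying $\vee$ to \eqref{sd_6} therefore transfers the estimate verbatim.

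For \eqref{ff1} I would first rescale $x=2ty$, which after completing the square $y^2+2ky=(y+k)^2-k^2$ turns the integral into $2\sqrt t\,e^{-itk^2}\int_{\alpha/2}^{\beta/2}e^{i t(y+k)^2}\widehat h(y)\,dy$. The phase $\phi(y)=t(y+k)^2$ has $\phi''\equiv 2t$ uniformly, independently of $k$, so the second-derivative van der Corput lemma in its amplitude form (with $\widehat h$ of bounded variation) gives $\bigl|\int_{\alpha/2}^{\beta/2}e^{it(y+k)^2}\widehat h(y)\,dy\bigr|\lesssim t^{-1/2}\bigl(\|\widehat h\|_{L^\infty(\R)}+\|(\widehat h)'\|_{L^1(\R)}\bigr)$, with absolute implied constant and the bound uniform in the endpoints $\alpha,\beta$ and in $k$. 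The prefactor $2\sqrt t$ then cancels the decay $t^{-1/2}$, producing the desired uniform constant $C_{(h)}$. The only place where genuine care is needed is exactly this triple uniformity in $t$, $(\alpha,\beta)$, and $k$ simultaneously: it hinges on the fact that the lower bound on $\phi''$ does not see the location $-k$ of the stationary point, and on using the amplitude (rather than plain) van der Corput lemma, so that the smooth compactly supported $\widehat h$ enters only through $\|\widehat h\|_{L^\infty(\R)}+\|(\widehat h)'\|_{L^1(\R)}$.
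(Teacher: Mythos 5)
Your argument is correct, and it differs from the paper mainly in being more self-contained. For \eqref{sd_6} the paper simply cites Yafaev's formulas (4.10) and (4.12), whereas you derive the asymptotic profile from scratch via the kernel factorization $e^{it\partial^2_{xx}}h(x)=\frac{1}{1+i}\frac{e^{ix^2/(4t)}}{\sqrt t}\widehat{g_t}(x/(2t))$ with $g_t(z)=e^{iz^2/(4t)}h(z)$, reducing the claim to $\|g_t-h\|_{L^2}\to 0$ by Plancherel and dominated convergence; this is a clean, complete proof and buys independence from the reference at essentially no cost. Your treatment of \eqref{sd_7} (unitarity of $\vee$ plus the multiplier identity $(e^{it\partial^2_{xx}}h)^{\vee}=e^{-it\xi^2}\widecheck h$) matches the paper's one-line ``direct corollary.'' For \eqref{ff1} both proofs start with the same completion of the square; the paper then rescales by $\sqrt t$ to the fixed phase $e^{i\xi^2}$ and runs a hand-rolled integration by parts with the cutoff $\mu_0$ near the stationary point, while you rescale by $t$ and invoke the amplitude form of the second-derivative van der Corput lemma, with the key observation in both cases being that the lower bound on the second derivative of the phase is independent of $k$ and of the endpoints. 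The van der Corput route is shorter and makes the triple uniformity in $t$, $(\alpha,\beta)$, $k$ transparent; the paper's explicit computation avoids quoting the lemma and is reused elsewhere (it is essentially the same manipulation as in Lemma \ref{a1}). No gaps.
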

\begin{proof}
Formula \eqref{sd_6} can be found in \cite{ya} (see formulas (4.10) and (4.12) there). Then, \eqref{sd_7} is a direct corollary. Proof of \eqref{ff1} follows from a direct calculation:
\[
\int_{\alpha t}^{\beta t}\frac{e^{ix^2/(4t)}}{\sqrt t}\widehat h(x/(2t))e^{ixk}dx=\frac{e^{-itk^2}}{\sqrt t}\int_{\alpha t}^{\beta t}\exp\left(i\left(    \frac{x}{2\sqrt t}+k\sqrt t      \right)^2    \right)\widehat h(x/(2t))dx
\]
\[
=2e^{-itk^2}\int_{\sqrt t(0.5\alpha+k)}^{\sqrt t(0.5\beta+k)}\exp(i\xi^2)\widehat h(-k+\xi/\sqrt t)d\xi\,.
\]
Now, consider an integral
\[
\int_{0}^{l}\exp(i\xi^2)\widehat h(-k+\xi/\sqrt t)d\xi
\]
for arbitrary $l\in \R,k\in \R,t\ge 1$ and let $\mu_0$ be a bump function introduced in \eqref{l3}. We have
\[
\int_{0}^{l}\exp(i\xi^2)\widehat h(-k+\xi/\sqrt t)d\xi=\int_{0}^{l}\exp(i\xi^2)\widehat h(-k+\xi/\sqrt t)\mu_0d\xi+\int_{0}^{l}\exp(i\xi^2)\widehat h(-k+\xi/\sqrt t)(1-\mu_0)d\xi\,.
\]
The first integral is bounded uniformly in all parameters since $\widehat h\in C^\infty_c(\R)$. For the second one, we can write
\begin{eqnarray*}
\int_{0}^{l}\exp(i\xi^2)\widehat h(-k+\xi/\sqrt t)(1-\mu_0)d\xi=\int_{0}^{l}\Bigl(\exp(i\xi^2)\Bigr)'\frac{\widehat h(-k+\xi/\sqrt t)(1-\mu_0)}{2i\xi}d\xi\\=
\exp(il^2)\frac{\widehat h(-k+l/\sqrt t)(1-\mu_0(l))}{2il}-\int_0^l \exp(i\xi^2)\left(\frac{\widehat h(-k+\xi/\sqrt t)(1-\mu_0(\xi))}{2i\xi}\right)'d\xi\,.
\end{eqnarray*}
The first term is uniformly bounded because $1-\mu_0(0)=0$. For the second one, we can show that each resulting integral is uniformly bounded, e.g., 
\[
\left|\frac{1}{\sqrt t}\int_0^l \exp(i\xi^2)\frac{\widehat h'(-k+\xi/\sqrt t)(1-\mu_0)}{2i\xi}d\xi\right|\lesssim \frac{1}{\sqrt t}\int_0^l |\widehat h'(-k+\xi/\sqrt t)|d\xi\le \|\widehat h'\|_1\,,
\]
\[
\left|\int_0^l \exp(i\xi^2)\frac{\widehat h(-k+\xi/\sqrt t)(1-\mu_0(\xi))}{\xi^2}d\xi\right|\le \|\widehat h\|_\infty\int_0^l \frac{|1-\mu_0(\xi)|}{\xi^2}d\xi\lesssim \|\widehat h\|_\infty\,,
\]
\[
\left|\int_0^l \exp(i\xi^2)\frac{\widehat h(-k+\xi/\sqrt t)\mu'_0(\xi)}{2i\xi}d\xi\right|\lesssim \|\widehat h\|_\infty
\]
and \eqref{ff1} is proved.
\end{proof}

\begin{lemma}\label{a1} Let $\epsilon \in (0,1),\nu>0,a>0$, and $|a\epsilon|\le \nu$. We have
\[
\left|\int_{0}^a e^{iu^2}g(u\epsilon)du\right|\le C_{(g,\nu)} \epsilon
\]
provided that $g\in C^\infty(\mathbb{R})$  and $g(0)=0$.
\end{lemma}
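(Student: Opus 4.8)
The plan is to exploit two features of the integrand that the hypotheses make available: the vanishing $g(0)=0$, which should produce one power of $\epsilon$, and the oscillation of $e^{iu^2}$, which is indispensable because the length $a$ of the interval may be as large as $\nu/\epsilon$. A direct estimate $|g(u\epsilon)|\le C\,u\epsilon$ followed by $\int_0^a u\epsilon\,du\sim a^2\epsilon$ is hopeless, and even sharper pointwise bounds typically leave a spurious $\log(1/\epsilon)$; to obtain the clean bound $C_{(g,\nu)}\epsilon$ the oscillation must be used through integration by parts.

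First I would record Hadamard's lemma: since $g\in C^\infty(\R)$ and $g(0)=0$, we may write $g(s)=s\,h(s)$ with $h(s)\df\int_0^1 g'(\tau s)\,d\tau\in C^\infty(\R)$, and both $h$ and $h'$ are bounded on the compact interval $[0,\nu]$ by quantities controlled by $\sup_{[0,\nu]}|g'|$ and $\sup_{[0,\nu]}|g''|$. Because $|a\epsilon|\le\nu$ and $\epsilon\in(0,1)$, the argument $u\epsilon$ stays in $[0,\nu]$ for every $u\in[0,a]$, so these bounds apply throughout the range of integration.

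Substituting $g(u\epsilon)=u\epsilon\,h(u\epsilon)$ and pulling out the factor $\epsilon$ gives
\[
\int_0^a e^{iu^2}g(u\epsilon)\,du=\epsilon\int_0^a u\,e^{iu^2}\,h(u\epsilon)\,du .
\]
Next I would integrate by parts using $u\,e^{iu^2}=\tfrac{1}{2i}\bigl(e^{iu^2}\bigr)'$:
\[
\int_0^a u\,e^{iu^2}h(u\epsilon)\,du=\frac{1}{2i}\Bigl[e^{iu^2}h(u\epsilon)\Bigr]_0^a-\frac{\epsilon}{2i}\int_0^a e^{iu^2}h'(u\epsilon)\,du .
\]
The boundary term is bounded by $\|h\|_{L^\infty[0,\nu]}$, hence is $O(1)$ uniformly in $a$ and $\epsilon$. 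For the remaining integral I would use the crude bound $\bigl|\int_0^a e^{iu^2}h'(u\epsilon)\,du\bigr|\le a\,\|h'\|_{L^\infty[0,\nu]}$ together with the constraint $a\le\nu/\epsilon$, so that the accompanying factor $\epsilon$ turns it into $\epsilon\cdot a\cdot\|h'\|_{L^\infty[0,\nu]}\le\nu\|h'\|_{L^\infty[0,\nu]}=O(1)$. Thus $\int_0^a u\,e^{iu^2}h(u\epsilon)\,du=O(1)$ with a constant of the form $C_{(g,\nu)}$, and multiplying by the prefactor $\epsilon$ yields $\bigl|\int_0^a e^{iu^2}g(u\epsilon)\,du\bigr|\le C_{(g,\nu)}\epsilon$, as claimed.

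The only subtlety, and the step I expect to demand the most care, is the tail estimate: since $a$ is permitted to grow like $\nu/\epsilon$ as $\epsilon\to0$, one cannot tolerate any bound that is merely $o(1)$ in $a$ for fixed $\epsilon$. The single integration by parts works precisely because the error integral already carries one power of $\epsilon$, so the crude $O(a)$ bound, combined with $a\epsilon\le\nu$, is exactly enough to compensate; it is the hypothesis $|a\epsilon|\le\nu$ that makes the argument close.
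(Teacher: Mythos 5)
Your proof is correct. The engine is the same as in the paper's argument: one integration by parts against $(e^{iu^2})'=2iu\,e^{iu^2}$, with the missing factor of $u$ supplied by the vanishing of $g$ at the origin. Indeed, writing $g(s)=s\,h(s)$ is exactly the paper's $g(u\epsilon)/(\epsilon u)$, and your boundary terms $e^{ia^2}h(a\epsilon)-h(0)$ are the paper's $e^{ia^2}\,\frac{g(a\epsilon)}{a\epsilon}-g'(0)$; after this first integration by parts the two remainders coincide, namely $\epsilon^2\int_0^a e^{iu^2}h'(u\epsilon)\,du$ with $h'=(g(s)/s)'=G$. Where you diverge is in how this remainder is killed: you use the crude bound $\epsilon^2 a\,\|h'\|_{L^\infty[0,\nu]}\le \epsilon\,\nu\,\|h'\|_{L^\infty[0,\nu]}=O(\epsilon)$, which already suffices, whereas the paper performs a second round (splitting $G=G(0)+G_1$, invoking the uniform boundedness of the Fresnel integral $\int_0^a e^{iu^2}du$, and integrating by parts once more) to show the remainder is $O(\epsilon^2)$ --- a sharper estimate than the statement requires, since the boundary terms are only $O(\epsilon)$ anyway. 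Your version is therefore a genuine simplification of the endgame, at no cost in the constant's dependence (it still only involves $\sup_{[0,\nu]}|g'|$, $\sup_{[0,\nu]}|g''|$ and $\nu$, so the uniformity in $k$ needed in the application of the lemma is preserved).
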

\begin{proof}We have
\begin{equation}\label{sd_0}
\int_{0}^a (e^{iu^2})'g(u\epsilon)u^{-1}du=e^{ia^2}\epsilon \left(\frac{g(a\epsilon)}{a\epsilon}\right)-\epsilon g'(0)-\epsilon\int_{0}^a e^{iu^2}\left(\frac{g(u\epsilon)}{\epsilon u}\right)'du\,.
\end{equation}
We can write $|g(\xi)|\le C_{(g,\nu)}|\xi|$ for $\xi\in [-\nu,\nu]$ and the first term is controlled by $C_{(g,\nu)}\epsilon$ since $|a\epsilon|\le\nu$. For the third one, we  introduce $G(u)\df (g(u)/u)'\in C^\infty(\mathbb{R})$ and write
\[
G_1(u)\df G(u)-G(0), \quad G(u)=G(0)+G_1(u)
\]
so that
\[
\int_0^a e^{iu^2}G(u\epsilon)du=G(0)\int_0^a e^{iu^2}du+\int_0^a e^{iu^2}G_1(\epsilon u)du\,.
\]
The absolute value of the first term is bounded by $C_{(g)}$ uniformly in $a$. For the second one, we can iterate the argument since $G_1\in C^\infty(\mathbb{R})$ and $G_1(0)=0$. We get
\begin{eqnarray}\nonumber
\int_0^a e^{iu^2}G_1(\epsilon u)du=-0.5 i\epsilon \int_0^a (e^{iu^2})'\frac{G_1(\epsilon u)}{\epsilon u}du\\
=-0.5i\epsilon\left(e^{ia^2}\frac{G_1(\epsilon a)}{\epsilon a}- G'_1(0)-\int_0^a e^{iu^2} \left(\frac{G_1(\epsilon u)}{\epsilon u}\right)'du\right)\label{sd_2}\,.
\end{eqnarray}
Writing a rough estimate 
\[
\left|\int_0^a e^{iu^2} \left(\frac{G_1(\epsilon u)}{\epsilon u}\right)'du\right|\le C_{(g)}|a|
\]
and substituting it into \eqref{sd_2} gives 
\[
\left|\int_0^a e^{iu^2}G_1(\epsilon u)du\right|\le C_{(g)}(\epsilon+\epsilon |a|)=C_{(g)}(\epsilon+\nu)\,.
\]
We bring it to \eqref{sd_0} to finish the proof of the Lemma.
\end{proof}

Consider $H$ defined as 
\[
H(x)=\int_{x}^\infty e^{it^2}dt\,.
\]
This integral can be related to the so-called ${\rm erf}$--function whose properties are well-known. However, our purpose is to obtain a specific representation for $H$ for $x\in [1,\infty)$ and we proceed directly as follows.
We change variables and iteratively  integrate by parts $n$ times  to get
\[
H(x)=i\frac{e^{ix^2}}{2x}-\frac{i}{2}\int_{x^2}^\infty \frac{e^{iu}}{u^{3/2}}du=e^{ix^2}\left(\sum_{j=0}^{n-1}\frac{c_j}{x^{1+2j}}+c_n'e^{-ix^2}\int_{x^2}^\infty \frac{e^{iu}}{u^{n+1/2}}du\right)\df e^{ix^2}(H_{1,n}+H_{2,n})
\]
where $\{c_j\}$  and $c_n'$ are some constants.
Let $\mu_+$ be the cutoff function that satisfies conditions: $ \mu_+$ is supported on $(1,\infty)$, $\mu_+(x)=1$ for $x>2$, $\mu_+\in C^\infty(\mathbb{R})$. Define
\[
H_{1,n}^{(m)}\df H_{1,n}\mu_+, \quad H_{2,n}^{(m)}\df H_{2,n}\mu_+\,.
\]
 \begin{lemma}\label{a2}
Let $n>1$. We have $\widehat{H_{1,n}^{(m)}}\in L^1(\mathbb{R})$, $\widehat{H_{2,n}^{(m)}}\in L^1(\mathbb{R})$.
\end{lemma}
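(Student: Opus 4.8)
The plan is to reduce everything to the elementary fact that if $g,g'\in L^2(\R)$ then $\widehat g\in L^1(\R)$. Indeed, by Plancherel $\int_\R(1+\xi^2)|\widehat g(\xi)|^2\,d\xi=\|g\|_2^2+\|g'\|_2^2$, and then Cauchy--Schwarz with the weight $(1+\xi^2)^{\pm 1/2}$ gives
\[
\int_\R|\widehat g|\,d\xi\le\left(\int_\R(1+\xi^2)|\widehat g|^2\,d\xi\right)^{1/2}\left(\int_\R\frac{d\xi}{1+\xi^2}\right)^{1/2}<\infty\,.
\]
So it suffices to check that $H_{1,n}^{(m)}$, $H_{2,n}^{(m)}$ and their first derivatives all lie in $L^2(\R)$.

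For $H_{1,n}^{(m)}=\mu_+\sum_{j=0}^{n-1}c_jx^{-(1+2j)}$ this is immediate: the function is smooth, supported in $(1,\infty)$, decays like $x^{-1}$, while its derivative decays like $x^{-2}$ (with an extra smooth, compactly supported contribution coming from $\mu_+'$), so both are square integrable.

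The substance is in $H_{2,n}^{(m)}=c_n'\,\mu_+\,e^{-ix^2}R_n$, where $R_n(x)\df\int_{x^2}^\infty e^{iu}u^{-(n+1/2)}\,du$. First I would record the exact derivative $R_n'(x)=-2x^{-2n}e^{ix^2}$, so that $e^{-ix^2}R_n'(x)=-2x^{-2n}$. Differentiating $H_{2,n}^{(m)}$ by the product and chain rules produces three terms: one carrying $\mu_+'$ (smooth, compactly supported, harmless), the term $-2ix\,\mu_+e^{-ix^2}R_n$ coming from $\tfrac{d}{dx}e^{-ix^2}$, and the term $\mu_+e^{-ix^2}R_n'=-2\mu_+x^{-2n}$, which decays like $x^{-2n}$ and so lies in $L^2(1,\infty)$ for every $n>1$.

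The main obstacle is the middle term: a crude tail bound $|R_n(x)|\le\int_{x^2}^\infty u^{-(n+1/2)}\,du\lesssim x^{-(2n-1)}$ only yields $|x R_n|\lesssim x^{-(2n-2)}$, which fails to be square integrable when $1<n\le 5/4$. To fix this I would integrate by parts once, exploiting the oscillation of $e^{iu}$: moving the derivative onto $u^{-(n+1/2)}$ produces a boundary term of size $x^{-(2n+1)}$ together with a remaining integral of the same order, so in fact $|R_n(x)|\lesssim x^{-(2n+1)}$. With this improved bound the middle term satisfies $|x\,R_n(x)|\lesssim x^{-2n}\in L^2(1,\infty)$ for all $n>1$, and likewise $|H_{2,n}^{(m)}(x)|\lesssim x^{-(2n+1)}$ is square integrable. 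Hence $H_{2,n}^{(m)},(H_{2,n}^{(m)})'\in L^2(\R)$, and the first paragraph gives $\widehat{H_{2,n}^{(m)}}\in L^1(\R)$, which completes the proof.
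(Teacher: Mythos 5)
Your proof is correct, and it reaches the conclusion by a slightly cleaner route than the paper. The paper treats the two pieces differently: for $H_{2,n}^{(m)}$ it records pointwise decay of the function and of its first \emph{and second} derivatives, deduces $|\widehat{H_{2,n}^{(m)}}(\xi)|\lesssim (1+|\xi|)^{-2}$, and concludes; for $H_{1,n}^{(m)}$ it argues separately for $|\xi|>1$ (all derivatives in $L^2$) and for $|\xi|<1$ (an explicit logarithmic bound on $\widehat{x^{-1}\mu_+}$ near $\xi=0$). You instead invoke the single embedding $H^1(\R)\hookrightarrow \mathcal{F}L^1(\R)$, which unifies both pieces, requires only one derivative, and makes the explicit small-$\xi$ computation unnecessary (indeed $\widehat g\in L^2$ already gives $\widehat g\in L^1(|\xi|<1)$ by Cauchy--Schwarz, so the log bound in the paper is not essential either). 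The genuinely nontrivial step is common to both arguments: the crude tail bound $|R_n(x)|\lesssim x^{-(2n-1)}$ is not enough, and one must integrate by parts once in $\int_{x^2}^\infty e^{iu}u^{-(n+1/2)}\,du$ to upgrade it to $x^{-(2n+1)}$. The paper states the resulting bound $|H_{2,n}^{(m)}|\le C_n(1+|x|)^{-(2n+1)}$ without derivation, whereas you make the step explicit and correctly identify that it is exactly what rescues the term $x\,\mu_+e^{-ix^2}R_n$ in the derivative; this is the right diagnosis (though, since $n$ here counts integrations by parts and is therefore an integer $\ge 2$, the crude bound would in fact already suffice for that term --- your argument has the merit of working for all real $n>1$). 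In short: correct, with a mild simplification of the paper's Fourier-side bookkeeping.
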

\begin{proof}   Consider $H_{2,n}^{(m)}$ first. We have 
\[
|H_{2,n}^{(m)}|\le C_n(1+|x|)^{-(2n+1)}, \quad |\partial_x H_{2,n}^{(m)}|\le C_n(1+|x|)^{-(2n)}, \quad \quad |\partial_{xx}^2 H_{2,n}^{(m)}|\le C_n(1+|x|)^{-(2n-1)}\,.
\]
Therefore, 
\[
|\widehat{H_{2,n}^{(m)}}(\xi)|<C_n(1+|\xi|)^{-2}
\]
and hence $\widehat{H_{2,n}^{(m)}}\in L^1(\mathbb{R})$.
For $H_{1,n}^{(m)}$, consider the first term,
$
x^{-1}\mu_+
$. Other terms can be handled similarly. We have $x^{-1}\mu_+\in C^\infty(\mathbb{R})\cap L^2(\R)$ and all of its derivatives are in $L^2(\mathbb{R})$. Thus, $
\xi^{j}\widehat{(x^{-1}\mu_+)}\in L^2(\mathbb{R})
$ for all $j\in \mathbb{Z}^+$. Therefore, $\widehat{(x^{-1}\mu_+)}(\xi)\in L^1(|\xi|>1)$. For $|\xi|<1$, we can write an estimate
\[
|\widehat{x^{-1}\mu_+}|<C|\log \xi|\,,
\]
which can be verified directly:
\[
\int_1^\infty \frac{\mu_+(x)}{x}e^{- i \xi x}dx=\int_2^\infty \frac{e^{- i \xi x}}{x}dx+O(1)\,.
\]
For $\xi\in (0,1)$,
\[
\int_2^\infty \frac{e^{- i \xi x}}{x}dx=\int_{2\xi}^\infty\frac{e^{- i u}}{u}du=\int_{2\xi}^1\frac{e^{- i u}}{u}du+\int_{1}^\infty\frac{e^{- i u}}{u}du=O(|\log \xi |+1)\,.
\]
For $\xi\in (-1,0)$, the argument is analogous and we get the statement of the Lemma.
\end{proof}\bigskip


\begin{thebibliography}{99}
\bibitem{a1} S. Agmon, Spectral properties of Schr\"odinger operators and scattering theory. Ann. Scuola Norm. Sup. Pisa Cl. Sci. (4) 2 (1975), no. 2, 151--218.

\bibitem{bes} R. Bessonov, Szeg\H{o} condition and scattering for one-dimensional Dirac operator, to appear in Constructive Approximation.

\bibitem{christ} M.~Christ, A.~Kiselev, Scattering and wave operators for one-dimensional
Schr\"odinger operators with slowly decaying nonsmooth potentials,
Geom. Funct. Anal.  12 (2002), 1174--1234.




\bibitem{den1} S. Denisov, On the existence of wave operators for some Dirac operators with square summable potential. Geom. Funct. Anal. 14 (2004), no. 3, 529--534.




\bibitem{d1} S. Denisov,  Continuous analogs of polynomials orthogonal on the unit circle and Krein systems. IMRS Int. Math. Res. Surv. 2006, Art. ID 54517, 148 pp.

\bibitem{dd1} S. Denisov, Weak asymptotics for Schr\"odinger evolution, Math. Modeling Natural Phenomena, Vol. 5, N4, 2010, 150--157.


\bibitem{du} V. Duchene, J. Marzuola, M. Weinstein,
Wave operator bounds for one-dimensional Schr\"odinger operators with singular potentials and applications.
J. Math. Phys. 52 (2011), no. 1, 013505.

\bibitem{e} M. B. Erdogan, M. Goldberg, W. Green,
On the $L^p$ boundedness of wave operators for two-dimensional Schr\"odinger operators with threshold obstructions. 
J. Funct. Anal. 274 (2018), no. 7, 2139--2161.


\bibitem{hor} L. H\"ormander,  The existence of wave operators in scattering theory. Math. Z. 146 (1976), no. 1, 69--91.

\bibitem{ks} B. S. Kashin, A. A.  Saakyan,  Orthogonal series.  Translations of Mathematical Monographs, 75. American Mathematical Society, Providence, RI, 1989.

\bibitem{kato} T. Kato,  Perturbation theory for linear operators. Reprint of the 1980 edition. Classics in Mathematics. Springer-Verlag, Berlin, 1995.


\bibitem{krein} M. G. Krein, Continuous analogues of propositions on
polynomials orthogonal on the unit circle, {\it Dokl. Akad. Nauk SSSR} {\bf 105}
(1955), 637--640.


\bibitem{l} A. Laptev, S. Naboko, O. Safronov, 
Absolutely continuous spectrum of Schr\"odinger operators with slowly decaying and oscillating potentials. 
Comm. Math. Phys. 253 (2005), no. 3, 611--631.

\bibitem{rs33} M. Reed, B. Simon, Methods of modern mathematical physics, III, Scattering theory, Academic Press, 1979.




\bibitem{ya} D. R. Yafaev,  Mathematical scattering theory. Analytic theory. Mathematical Surveys and Monographs, 158. American Mathematical Society, Providence, RI, 2010.



\end{thebibliography}
\end{document}